\documentclass{article}
\usepackage{amsmath, amssymb, amsthm,graphicx}

\newtheorem{definition}{Definition}

\newtheorem{theorem}{Theorem}

\newtheorem{proposition}{Proposition}
\newtheorem{lemma}{Lemma}

\title{Variational proof of the existence of  the super-eight orbit in the four-body problem
}
\author{Mitsuru Shibayama\thanks{Mathematical Science, Osaka University; 
1-3 Machikaneyama-cho, Toyonaka, Osaka 560-8531, Japan (e-mail: shibayama@sigmath.es.osaka-u.ac.jp)}}
\begin{document}
\maketitle

\begin{abstract}

Using the variational method, 
Chenciner and Montgomery (2000 {\it Ann. Math.} {\bf 152} 881--901)  
proved the existence of an eight-shaped periodic solution of the planar  three-body problem 
with equal masses. 
Just after the discovery, 
Gerver have numerically found a similar periodic solution called ``super-eight''  in the planar four-body problem with equal 
mass. 
 
In this paper we prove the existence of the  super-eight orbit by using the variational method.
The difficulty of the proof is to eliminate the possibility of  collisions. 
In order to solve it, we apply the scaling technique established by Tanaka
(1993 {\it Ann. Inst. H. Poincar\'{e} Anal. Non Lin\'{e}aire} {\bf 10}, 215--238, 
1994 {\it Proc. Amer. Math. Soc.} {\bf 122}, 275--284) and investigate 
the asymptotic behavior of a binary collision.

\end{abstract}

\section{Main result}

Using the variational method, Chenciner and Montgomery \cite{CM00}
proved the existence of a new periodic solution of figure-eight shape to the planar three-body problem
with equal masses.
The solution is called figure-eight solution.
In that paper there is Simo's  numerical  solution  which is expected to be identical
with the theoretically obtained solution.

Super-eight orbit is a periodic orbit in the  planar four-body problem with equal masses,
along which 
 the particles shadow each other on the symmetric curve
as per Figure \ref{fig:s8}.
\begin{figure}[htbp]\label{fig:s8}
\begin{center}
\includegraphics[scale=0.7]{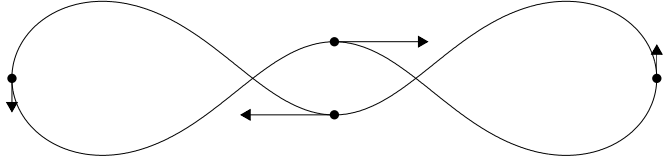}
\end{center}
\caption{The super-eight orbit}
\end{figure} 
Gerver  numerically discovered  this orbit just after the Chenciner-Mongomery's result.
A few years after, Kapela-Zgliczy\'nski \cite{KZ03} gave a computer-assisted proof of the existence.
Since Chenciner-Montgomary's result, 
a number of periodic solutions have been proved to exist.
But a variational proof of the existence of the super-eight has not devoted yet.
The purpose of this paper is to provide a variational proof of the existence of 
a periodic solution which can be thought to be identical with 
Gerver's numerical solution.
We also use the minimizing mehod of the Lagrangian action functional.
The difficulty in this field is always to eliminate a collision.
In order to solve that, 
the following three method have been used.
\begin{description}
\item[Global estimate] 
One estimates the lower bound of the action functional for collision paths
and the upper bound of the action functional for a test path, 
and then shows that the former is greater than the latter.
The test path is quite different from collision paths.
The technique  was used for getting several solutions
(for instance, \cite{CM00, Chen01}). 

\item[Local estimate] 
The Sundman estimate provides  the asymptotic estimate of a solution with a collision. 
One modifies the curve around the collision time such that 
the value of the action functional is lower. 
The technique  was used for getting several solutions
(for instance, \cite{Chen06, Shibayama, Venturelli}). 

\item[Averaging] Marchal \cite{Marchal} proved that the minimizers of the fixed ends problem 
is free of collision. 
For a collision path, he proved that 
the average of the value of action functional  for  all direction  near the collision path
is lower the value for the collision path
(See also \cite{Chenciner}).
Ferrario-Terracini developed the technique under the symmetric constraint, 
and provided a criterion  called ``the rotating circle property'' 
that the minimizer has no collision.
\end{description}

In our proof, 
we set the boundary condition 
such that the configuration is rhomboidal at $t=0$ and rectangle at $t=\pi/4$. 
We can avoid the total collision by using the global estimate.
The ``interior'' binary collision (collision which occurs at $t \in (0, \pi/4)$) can be eliminated by applying Ferrario-Terracini's theorem.
But we can not apply the existing methods  to eliminate a ``boundary'' collision (collision which occurs at $t=0, \pi/4$).
 In order to solve this, a new technique is necessary.
By using the scaling technique  established by Tanaka \cite{T93, T94}, 
 we investigate the asymptotic behavior with a minimizer with a boundary collision. 
 Then the behavior induce a contradiction 
 with the fact that the solution minimizes the action functional. 

In order to state our theorem exactly, 
we need to prepare some linear maps.
Let $P_{x}$ and $P_{y}$ be the projection to $x$- and $y$-axis, respectively, 
and let $R_{x}$ and $R_{y}$ be the reflection with respect to $x$- and $y$-axis, respectively.
These are represented by the matrices
\begin{align*}
& & P_{x}&=\left(\begin{array}{cc}1 & 0\end{array}\right), & 
P_{y}&=\left(\begin{array}{cc}0 & 1\end{array}\right), & & \\
& & R_{x}&=\left(\begin{array}{cc}1 & 0 \\0 & -1\end{array}\right), &
 R_{y}&=\left(\begin{array}{cc}-1 & 0 \\0 & 1\end{array}\right).& &
\end{align*}

 The planar four-body problem 
with equal masses is given by the following set of ODEs:
\begin{equation}\label{4bd}
 \frac{d^{2}q_{k}}{dt^{2}}=-\sum_{j \neq k}\frac{q_{k}-q_{j}}{|q_{k}-q_{j}|^{3}}, \qquad q_{k} \in 
 \mathbb{R}^2, \quad  k=1,2,3,4. 
 \end{equation}

Our main result is the following:
\begin{theorem}\label{main}
There is a collisionless $2\pi$-periodic solution $\left(q_{1}\left(t\right), q_{2}\left(t\right), q_{3}\left(t\right), q_{4}\left(t\right)\right)
: \mathbb{R} \to (\mathbb{R}^2)^{4}$
of \eqref{4bd} 
such that
\[
P_{y}q_{1}\left(0\right) > 0, \qquad
P_{x}q_{2}\left(0\right) > 0, \qquad 
P_{x}q_{1}\left(\frac{\pi}{4}\right) > 0, \qquad
P_{y}q_{1}\left(\frac{\pi}{4}\right) < 0, \qquad
\]
and that for any $t \in \mathbb{R}$
 \begin{align}
 &q_{1}\left(t\right)=R_{y}{q_{1}\left(-t\right)}=R_{x}{q_{2}\left(\frac{\pi}{2}-t\right)}, &
q_{2}\left(t\right)&=R_{x}{q_{2}\left(-t\right)} \label{par} \\
&q_{1}\left(t\right)=-q_{3}\left(t\right), & q_{2}\left(t\right)&=-q_{4}\left(t\right). \label{sym}
\end{align}

\end{theorem}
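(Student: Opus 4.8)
The plan is to realize the super‑eight as a minimizer of the Lagrangian action over a space of loops carrying the symmetries \eqref{par}--\eqref{sym}, following the variational scheme of Chenciner--Montgomery but with the collision analysis strengthened along the lines the author indicates. First I would set up the functional framework. The relations \eqref{par}--\eqref{sym} express the whole loop $(q_{1},\dots,q_{4})$ in terms of the single arc $q_{1}|_{[0,\pi/4]}$: the identity $q_{1}(t)=R_{x}q_{2}(\tfrac{\pi}{2}-t)$ together with $q_{2}(t)=R_{x}q_{2}(-t)$ recovers $q_{2}$ on $[0,\pi/2]$, the relation $q_{1}(t)=R_{y}q_{1}(-t)$ propagates over the rest of the period, and \eqref{sym} fixes $q_{3},q_{4}$. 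At the ends of the fundamental interval the constraints force $q_{1}(0)$ onto the $y$‑axis and $q_{2}(0)$ onto the $x$‑axis (a symmetric rhombus) and force $q_{2}(\pi/4)=R_{x}q_{1}(\pi/4)$ (a rectangle). So I would take $\Lambda$ to be the set of $H^{1}$ arcs $u:[0,\pi/4]\to(\mathbb{R}^{2})^{4}$ satisfying these endpoint constraints, collisionless on the open interval and lying in the correct connected component (winding class) of the configuration space minus the collision set, with action $\mathcal{A}(u)=\int_{0}^{2\pi}\bigl(\tfrac12\sum_{k}|\dot q_{k}|^{2}+\sum_{j<k}|q_{j}-q_{k}|^{-1}\bigr)\,\d t$, which by the symmetries equals a fixed multiple of the integral over $[0,\pi/4]$. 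By Palais' principle of symmetric criticality a critical point of $\mathcal{A}$ on this symmetric class is a genuine solution of \eqref{4bd}, the natural transversality conditions at $t=0,\pi/4$ being exactly what makes the symmetric extension $C^{2}$. Coercivity of $\mathcal{A}$ on $\Lambda$ (the kinetic term together with the endpoint constraints pins the overall scale) plus weak lower semicontinuity then yields a minimizer $u^{\ast}\in\overline{\Lambda}$.

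Next I would bound the minimum from above by a test path: a convenient symmetric loop — a small perturbation of two superposed ellipses, or Gerver's numerical configuration sampled at $t=0$ and $t=\pi/4$ and interpolated in the prescribed class — for which one estimates $\mathcal{A}(u^{\ast})\le\mathcal{A}_{\mathrm{test}}<\infty$, with the bound explicit enough to stay below the total‑collision threshold used in the next step.

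The heart of the proof is excluding collisions from $u^{\ast}$. For a total collision, a path whose configuration shrinks to the origin has action bounded below by a Sundman/Gordon‑type quantity that the choice of $\mathcal{A}_{\mathrm{test}}$ is arranged to beat — this is the global estimate. For an interior binary collision at some $t_{0}\in(0,\pi/4)$ I would invoke Ferrario--Terracini: one must check that the symmetry group generated by \eqref{par}--\eqref{sym}, acting on time, on $\mathbb{R}^{2}$ and on the indices $\{1,2,3,4\}$, has the rotating circle property; for this dihedral‑type group this reduces to verifying that at each relevant point no admissible collision configuration obstructs an area‑decreasing local rotation, and then $u^{\ast}$ has no interior collision. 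The remaining and main obstacle is a \emph{boundary} collision, at $t=0$ or $t=\pi/4$, where the endpoint constraint is active and the averaging/local arguments do not apply directly. Here I would follow Tanaka's scaling technique \cite{T93,T94}: assuming a binary collision compatible with the rhombus, respectively rectangle, constraint (e.g.\ $q_{1}=q_{2}$ on an axis), rescale $u^{\ast}$ near the collision instant in space and time by the Sundman asymptotics, extract a limit, and identify it with a collision--ejection parabolic (or homothetic) orbit of the two colliding bodies in the field of the others. The precise ejection direction, combined with the boundary constraint, then permits constructing an admissible local competitor — a small corner‑cutting deformation respecting the symmetry and the endpoint condition — that strictly lowers $\mathcal{A}$, contradicting minimality. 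Making this blow‑up compatible with the \emph{one‑sided} nature of the boundary (the collision time sits at the edge of the fundamental interval) is the delicate point, and is precisely where Tanaka's refined estimates on the rate and direction of approach are needed; I expect this to be by far the hardest step.

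Finally, once $u^{\ast}$ is collision‑free it is a smooth solution of \eqref{4bd} on $(0,\pi/4)$, and the transversality conditions make its symmetric extension a $C^{2}$, hence classical, $2\pi$‑periodic solution of \eqref{4bd} carrying \eqref{par}--\eqref{sym}. The strict inequalities $P_{y}q_{1}(0)>0$, $P_{x}q_{2}(0)>0$, $P_{x}q_{1}(\pi/4)>0$, $P_{y}q_{1}(\pi/4)<0$ follow because equality in any of them would force an extra coincidence of bodies on an axis — a boundary collision just excluded — or would collapse the rectangle or the rhombus to a lower‑dimensional configuration; hence $u^{\ast}$ lies in the open class and is the desired super‑eight orbit. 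The comparatively standard ingredients are the coercivity, the test‑path estimate and the interior‑collision exclusion; the boundary‑collision analysis is the new contribution that the proof must supply.
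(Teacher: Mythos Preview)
Your outline matches the paper's architecture almost exactly: reduce to an $H^{1}$ arc on $[0,\pi/4]$ with rhombus/rectangle boundary conditions, invoke Palais, get coercivity, bound above by an explicit test path, rule out total collision by a global Gordon--Sundman estimate, rule out interior binary collisions by Ferrario--Terracini's rotating circle property, and attack boundary collisions with Tanaka's blow-up. The overall plan is correct.

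Where your sketch diverges from the paper, and where it is underspecified in a way that matters, is the boundary-collision step. You propose to rescale the minimizer $u^{\ast}$ itself, identify a parabolic ejection limit, and then ``corner-cut'' near the collision. The paper does something rather different and more delicate. It first regularizes the functional by a strong-force perturbation $\mathcal{J}^{\varepsilon}$ so that each $\varepsilon$-minimizer $\gamma^{\varepsilon}$ is automatically collision-free, and the candidate $\gamma^{0}$ is obtained as a limit of these. The scaling is then performed on the \emph{sequence} $\gamma^{\varepsilon_{n}}$, with $\delta_{n}=|q_{1}^{\varepsilon_{n}}(0)|>0$, and the blow-up limit depends on the parameter $d=\lim \varepsilon_{n}/\delta_{n}\in[0,\infty]$. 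Each regime requires its own contradiction: for $0<d\le\infty$ the limit orbit $y_{d}$ crosses the $y$-axis at some interior time, and reflecting one piece produces a nonsmooth minimizer of the blow-up functional $\mathcal{I}_{d}$, which is impossible; for $d=0$ Tanaka's direction result pins the ejection along $(0,-1)$, and a Levi--Civita regularization (the paper's Appendix~A) shows that $q_{1}$ and $q_{2}$ enter the \emph{same} quadrant for small $t>0$, after which a quadrant-by-quadrant reflection of $q_{1},q_{2}$ strictly lowers the potential without changing the kinetic term. Neither mechanism is a local corner-cutting of $u^{\ast}$; both are reflection arguments, one at the blow-up scale and one on the original path. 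Without the $\varepsilon$-family you lose the parameter $d$ and the clean trichotomy, and without the Levi--Civita asymptotics the $d=0$ case has no obvious competitor. Finally, the case $t=\pi/4$ is not handled separately: the change of variables $Q_{1}=(q_{1}+q_{2})/\sqrt{2}$, $Q_{2}=(q_{1}-q_{2})/\sqrt{2}$ makes the situation at $t=\pi/4$ formally identical to that at $t=0$.
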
 
It easily follows from
\eqref{par} and \eqref{sym} that
\[ q_{1}\left(t\right)=q_{2}\left(t+\frac{\pi}{2}\right)= q_{3}\left(t+\pi\right) =q_{4}\left(t+\frac{3\pi}{2}\right)=q_{1}\left(t+2\pi\right). \]
It means this solution is  a so-called  simple choreography.
The trivial solution (the rotating square) does not satisfy these properties and 
a solution with these properties have not been proved to exist. 
The solution has a same feature as Gerver's numerical solution.

This paper is organized as the following.
In the next section, 
we introduce the variational formulation of the $n$-body problem and some known results.
In Section 3 we 
prove the existence of a generalized periodic solution by using the variational method.
We eliminate the total collision in Section 4 and 
the binary collision in Section 5.
In Appendix A, we provide a detailed argument
for the asymptotic behavior 
 of the binary collision for a certain case by using  the Levi-Civita coordinates.
In Appendix B, we give a numerical result 
confirming that the theoretically obtained solution is identical 
with Gerver's numerical solution.

\section{Symmetry and existence of a minimizer}
We introduce some known results  
which we use in the following sections. 
The $N$-body problem is given by the following set of ODEs: 
\begin{equation}\label{nbd}
 \frac{d^{2}q_{k}}{dt^{2}}=-\sum_{j \neq k}\frac{m_{j}}{|q_{k}-q_{j}|^{3}}(q_{k}-q_{j}), \qquad q_{k} \in V, \quad  k=1,\dots, N
 \end{equation}
where $m_{j}>0$ and $V=\mathbb{R}^{2}$ or $\mathbb{R}^{3}$.
The equation \eqref{nbd} is 
equivalent to the variational problem with respect to the action functional: 
\[ \mathcal{A}(q)=\int_{0}^{T} \frac{1}{2}\sum_{i=1}^{N}m_{i}|\dot{q}_{i}|^{2}+\sum_{1\le i<j \le N} \frac{m_{i}m_{j}}{|q_{i}-q_{j}|}dt. \] 
Let $\mathcal{X}$ be defined by
\[ \mathcal{X}=\left\{ q=(q_{1}, \dots,  q _{N}) \in V^{N} \mid \sum_{i=1}^{N}m_{i}q_{i}=0\right\}\] 
and let 
\[ \Delta_{ij}=\{q \in \mathcal{X} \mid q_{i}=q_{j}\}, \quad \Delta = \cup_{1\le i<j\le 4} \Delta_{ij}.\]
$\mathcal{X} \backslash \Delta$ is denoted by $\hat{\mathcal{X}}$.
$\mathcal{A}$ is defined on the Sobolev space $\Lambda=H^{1}(\mathbb{T}, \mathcal{X})$ of 
$\mathcal{X}$-valued function 
on $\mathbb{T}=\mathbb{R}/2 \pi \mathbb{Z}$.
$\hat{\Lambda} =H^{1}(\mathbb{T},\hat{\mathcal{X}})$ is the subspace of collision-free paths.

We consider an action of a finite group $G$ to $\Lambda$  which has the 
following property: 
there are representations 
\begin{align*}
\tau&: G \to O(2), \\
\rho&: G \to O(\dim V), \\
\sigma&: G \to \mathfrak{S}_{N}
\end{align*}
such that for $g \in G, q(t)=(q_{1}, \dots, q_{N})(t) \in \Lambda$
\[ g \cdot ((q_{1}, \dots , q_{N} ) (t)) = ( \rho(g)q_{\sigma(g^{-1})(1)},
\dots ,\rho(g)q_{\sigma(g^{-1})(N)})(\tau(g^{-1})t) \]
 for $g \in G$ and $q(t)=(q_{1}(t), \dots, q_{N}(t)) \in \Lambda$.
 Let $\Lambda^{G}$ and $\hat{\Lambda}^{G}$ be the set of loops fixed by $G$ in 
 $\Lambda$ and $\hat{\Lambda}$, respectively, 
 and let $\mathcal{A}^{G}=\mathcal{A}|_{\Lambda^{G}}$.
 
 \begin{proposition}[Palais principle \cite{P}] \label{P:Palais}
 If $\mathcal{A}$ is invariant under the group action of $G$, 
 then 
a critical point of  $\mathcal{A}^{G}$ in $\hat{\Lambda}^{G}$  is a critical point of $\mathcal{A}$ in $\Lambda$.
 \end{proposition}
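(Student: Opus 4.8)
\emph{Proof proposal.} This is the classical principle of symmetric criticality of Palais, so the plan is simply to run its short finite-group averaging argument in the present functional setting. Let $q\in\hat{\Lambda}^{G}$ be a critical point of $\mathcal{A}^{G}=\mathcal{A}|_{\Lambda^{G}}$. Because the centre-of-mass condition defining $\mathcal{X}$ is linear, $\Lambda=H^{1}(\mathbb{T},\mathcal{X})$ is a Hilbert space and $\hat{\Lambda}$ is open in it; consequently $T_{q}\Lambda=\Lambda$, the functional $\mathcal{A}$ is of class $C^{1}$ near $q$ (the Newtonian potential is smooth off the collision set $\Delta$), and the statement ``$q$ is a critical point of $\mathcal{A}$ in $\Lambda$'' means precisely that the Fr\'echet derivative $D\mathcal{A}(q)\in\Lambda^{*}$ vanishes. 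That is what I would prove. First observe that, by the defining formula, each $g\in G$ acts on $\Lambda$ as a bounded \emph{linear} operator: it is the composition of the orthogonal map $\rho(g)$ on $V$, the reparametrization of $\mathbb{T}$ by the isometry $\tau(g^{-1})$, and the permutation $\sigma(g^{-1})$ of the $N$ components, and this linear action maps $\hat{\Lambda}$ into itself.

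Next I would introduce the averaging operator $P\colon\Lambda\to\Lambda$, $Pv=\frac{1}{|G|}\sum_{g\in G}g\cdot v$, which is well defined since $G$ is finite. Using $g\cdot(h\cdot v)=(gh)\cdot v$ one checks at once that $P$ is a linear projection, $P^{2}=P$, with range exactly $\Lambda^{G}$; in particular $Pv\in\Lambda^{G}$ for every $v\in\Lambda$, and $P$ restricts to the identity on $\Lambda^{G}$.

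The key step is to differentiate the $G$-invariance of $\mathcal{A}$ at the fixed point $q$. Fix $v\in\Lambda$; for $|t|$ small both $q+tv$ and $g\cdot(q+tv)=g\cdot q+t\,(g\cdot v)$ lie in the open set $\hat{\Lambda}$, so differentiating the identity $\mathcal{A}(g\cdot(q+tv))=\mathcal{A}(q+tv)$ at $t=0$ and using $g\cdot q=q$ gives $D\mathcal{A}(q)[g\cdot v]=D\mathcal{A}(q)[v]$ for all $g\in G$. Averaging over $g$ yields $D\mathcal{A}(q)[Pv]=D\mathcal{A}(q)[v]$. On the other hand $Pv\in\Lambda^{G}$, and since $\mathcal{A}^{G}=\mathcal{A}|_{\Lambda^{G}}$ and $q$ is a critical point of $\mathcal{A}^{G}$, we have $D\mathcal{A}(q)[Pv]=0$. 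Hence $D\mathcal{A}(q)[v]=0$ for every $v\in\Lambda$, which is the claim. There is no genuine obstacle in this argument; the only points needing care are the bookkeeping ones already flagged --- that $\mathcal{A}$ really is differentiable on the open collisionless set $\hat{\Lambda}$, that each $g$ acts linearly and continuously so that the chain rule applies and $P$ is meaningful, and that $\Lambda$ is a vector space, so that no constraint-manifold Lagrange multipliers enter and ``critical point'' unambiguously means vanishing derivative.
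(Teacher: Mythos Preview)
Your argument is correct: it is precisely the standard finite-group averaging proof of Palais' principle of symmetric criticality, and all the technical checks you flag (linearity and continuity of the $G$-action on $\Lambda$, $C^{1}$-smoothness of $\mathcal{A}$ on the open set $\hat{\Lambda}$, well-definedness of the projector $P$) are valid in this setting.

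There is nothing to compare against, however: the paper does not give its own proof of this proposition. It is stated as a quotation of a known result with a reference to Palais~\cite{P}, and used as a black box thereafter. Your write-up therefore goes beyond what the paper provides, but it is accurate and self-contained.
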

 
 The group $G$ acts on $\mathcal{X}$ by $\rho$ and $\sigma$.
 $\mathcal{X}^{G}$ and $\hat{\mathcal{X}}^{G}$ are defined as the set of points 
 fixed by $G$ in $\mathcal{X}$ and $\hat{\mathcal{X}}$, respectively.
 
 A functional $\mathcal{F}$ on a Banach space is called  coercive, 
 if the value $\mathcal{F}(q)$ of the functional diverges to infinity as the norm $\|q\|$ diverges to infinity.
 
 \begin{proposition}[\cite{FT04}, Proposition 4.1]\label{P:exist}
 If $\mathcal{X}^{G}=\{0\}$,
the action functional $\mathcal{A}^{G}$ is coercive. 
\end{proposition}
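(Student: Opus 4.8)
\textit{Proof proposal.} The plan is to establish the stronger quantitative fact that there is a constant $C>0$ with $\|q\|_{H^{1}}^{2}\le C\,\mathcal{A}^{G}(q)$ for every $q\in\Lambda^{G}$ (reading $C\cdot(+\infty)=+\infty$ for collision paths); coercivity of $\mathcal{A}^{G}$ is then immediate. Two ingredients drive the estimate: the Newtonian potential is non-negative, so $\mathcal{A}^{G}$ dominates the kinetic part of the action, and the $G$-equivariance of a loop forces its time-average to be a $G$-fixed configuration, which by hypothesis must vanish.

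First I would split an arbitrary $q\in\Lambda^{G}$ as $q=\bar q+\tilde q$, where $\bar q=\frac{1}{2\pi}\int_{0}^{2\pi}q\,\d t\in\mathcal{X}$ is the time-average and $\tilde q$ is the zero-average part; the two pieces are orthogonal in $L^{2}(\mathbb{T},V^{N})$. By the Poincar\'e--Wirtinger inequality on $\mathbb{T}$ there is $c>0$ with $\|\tilde q\|_{L^{2}}\le c\,\|\dot{\tilde q}\|_{L^{2}}=c\,\|\dot q\|_{L^{2}}$, so
\[
\|q\|_{H^{1}}^{2}=\|\bar q\|_{L^{2}}^{2}+\|\tilde q\|_{L^{2}}^{2}+\|\dot q\|_{L^{2}}^{2}\le 2\pi\,|\bar q|^{2}+(1+c^{2})\,\|\dot q\|_{L^{2}}^{2}.
\]
On the other hand, since each term $m_{i}m_{j}/|q_{i}-q_{j}|$ is non-negative,
\[
\mathcal{A}^{G}(q)\ge\frac12\int_{0}^{2\pi}\sum_{i=1}^{N}m_{i}|\dot q_{i}|^{2}\,\d t\ge\frac{m_{*}}{2}\,\|\dot q\|_{L^{2}}^{2},\qquad m_{*}:=\min_{1\le i\le N}m_{i}>0.
\]

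The crux is to show $\bar q\in\mathcal{X}^{G}$, for then $\bar q=0$ by hypothesis and the term $2\pi|\bar q|^{2}$ disappears. Given $g\in G$, the $k$-th component of $g\cdot q$ is $\rho(g)\,q_{\sigma(g^{-1})(k)}(\tau(g^{-1})t)$; since $\tau(g^{-1})\in O(2)$ acts on the circle $\mathbb{R}/2\pi\mathbb{Z}$ as a rotation or a reflection, it is measure-preserving, so integrating in $t$ yields $\overline{g\cdot q}=g\cdot\bar q$, where the right-hand side is the action of $g$ on $\mathcal{X}$ through $\rho$ and $\sigma$. As $q\in\Lambda^{G}$ satisfies $g\cdot q=q$, we get $g\cdot\bar q=\bar q$ for all $g$, i.e.\ $\bar q\in\mathcal{X}^{G}=\{0\}$. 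Substituting $\bar q=0$ into the two displays gives $\|q\|_{H^{1}}^{2}\le(1+c^{2})\|\dot q\|_{L^{2}}^{2}\le\frac{2(1+c^{2})}{m_{*}}\,\mathcal{A}^{G}(q)$, which is the claimed bound.

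I do not expect a genuine obstacle: once the two ingredients are in place the computation is routine. The one step that deserves care is the averaging identity $\overline{g\cdot q}=g\cdot\bar q$ — in particular the fact that the time-reparametrization $t\mapsto\tau(g^{-1})t$ is measure-preserving on $\mathbb{T}$ — since this is exactly what places $\bar q$ in $\mathcal{X}^{G}$. It is also precisely here that the hypothesis $\mathcal{X}^{G}=\{0\}$ is used: without it the constant mode $\bar q$ of the loop is unconstrained, the kinetic energy alone cannot control $\|q\|_{L^{2}}$, and coercivity need not hold.
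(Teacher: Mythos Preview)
The paper does not supply its own proof of this proposition; it is quoted directly from Ferrario--Terracini \cite{FT04}, Proposition~4.1, and used as a black box. Your argument is correct and is essentially the standard one given there: the kinetic term alone bounds $\|\dot q\|_{L^{2}}^{2}$ from below, Poincar\'e--Wirtinger controls the zero-mean part of $q$, and the key observation that the time-average $\bar q$ lies in $\mathcal{X}^{G}$ (because $\tau(g)$ acts on $\mathbb{T}$ by a measure-preserving rotation or reflection) forces $\bar q=0$ under the hypothesis. There is nothing to add.
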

  
 For a fixed $t \in \mathbb{T}$, let $G_{t}$ be the isotropy subgroup of $G$ at $t$ under the 
 $\tau$-action, and for 
 $i \in \{1, \dots, N\}$, let $G_{t}^{i}$ be the isotropy subgroup of $G_{t}$ at $i$ under the $\sigma$-action, namely,
 \begin{align*}
 G_{t}&=\{g \in G \mid \tau(g)t=t\} \\
 G_{t}^{i}&=\{ g \in G_{t} \mid \sigma(g)i=i \}. 
 \end{align*} 
 \begin{definition}
 We say a finite group $G$ acts on $\Lambda$ with the rotating circle property (or $G$ has the rotating circle property), 
 if for any $G_{t}$ and for at least $N-1$ indices $i$ there exists a circle in $V$ such that $G_{t}$ acts on the circle by rotation
 and that the circle is contained in $(V)^{G_{t}^{i}}$.
 \end{definition}
 \begin{proposition}[\cite{FT04}, Theorem 10.3]\label{p:ft}
  Let $K=\ker \tau$. 
 Consider a finite group $K$ acting on $\Lambda$ with the rotating circle property.
 Then a minimizer of the $K$-equivariant fixed-ends (Bolza) problem is free of collisions. 
 \end{proposition}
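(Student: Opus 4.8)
The plan is to argue by contradiction, following the averaging scheme of Marchal \cite{Marchal} adapted to the symmetric setting by Ferrario and Terracini \cite{FT04}. Suppose $q \in \Lambda^{K}$ is a minimizer of the $K$-equivariant Bolza problem that suffers a collision. Since $K=\ker\tau$, the group $K$ does not move time, so every collision occurs at a well-defined instant $t_{0}$ and the colliding clusters are permuted among themselves by the isotropy group $G_{t_{0}}$; it therefore suffices to exclude a single collision at one such $t_{0}$, after which the symmetry propagates the conclusion to all times. I would first isolate the colliding cluster and record its local structure: on a short interval around $t_{0}$ only the mutual distances within the cluster tend to zero, while the remaining particles stay uniformly separated from it, so the singular part of the potential is, to leading order, that of the colliding subsystem alone.

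The next step is the asymptotic analysis of the collision. Using the Sundman--Sperling estimates (equivalently, McGehee-type blow-up coordinates), I would show that after normalization by the size of the cluster the configuration converges along the collision to a central configuration, and that the motion is asymptotically homothetic (parabolic) in the blow-up variable. This yields the precise leading-order behaviour of both the kinetic term and the self-potential of the cluster near $t_{0}$, and in particular shows that the local contribution of the path to $\mathcal{A}$ is finite but attained along a degenerate, homothetic approach.

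With the local model in hand, the heart of the argument is the construction of a symmetric competitor with strictly smaller action. Here is where the rotating circle property enters: for at least $N-1$ of the colliding indices $i$ there is a circle in $V$ on which $G_{t_{0}}$ acts by rotation and which lies in the fixed subspace $(V)^{G_{t_{0}}^{i}}$. I would use this circle to displace the collision point through a one-parameter family of equivariant variations supported near $t_{0}$, keeping the varied path inside $\Lambda^{K}$ and respecting the fixed endpoints. Averaging the action over this circle and invoking Marchal's lemma---the strict mean-value (subharmonicity) inequality for the Newtonian potential, which makes the averaged self-potential strictly smaller than the value along the homothetic collision---produces a nearby symmetric path with $\mathcal{A}$ strictly decreased. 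This contradicts the minimality of $q$, and since binary, partial-cluster and total collisions are all covered by the same mechanism under the rotating circle hypothesis, the minimizer must lie in $\hat{\Lambda}^{K}$.

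The main obstacle I expect is twofold and concentrated in the last two steps. First, the blow-up analysis must be sharp enough to identify the homothetic limit and to control the error terms uniformly, so that the leading-order action comparison is not spoiled by the off-cluster interactions or by the time-reparametrization implicit in the normalization. Second, and more delicate, the averaging variation must be simultaneously (i) strictly action-decreasing at leading order, which forces a careful pairing of Marchal's strict inequality against the cost of the displacement, and (ii) genuinely admissible, i.e. equivariant under $K$ and compatible with the Bolza endpoints---it is precisely the rotating circle property that reconciles these two demands, and verifying that the circle furnishes enough independent directions to beat the collision for every possible cluster type is the crux of the proof.
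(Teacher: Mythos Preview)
The paper does not give its own proof of this proposition; it is quoted verbatim as Theorem~10.3 of Ferrario--Terracini \cite{FT04} and used later as a black box to rule out interior binary collisions. Your outline is a faithful high-level sketch of the actual Ferrario--Terracini argument (Sundman--Sperling/McGehee blow-up to a homothetic central-configuration limit, followed by Marchal-type averaging over the circle furnished by the rotating circle property to produce a $K$-equivariant competitor with strictly smaller action), so there is nothing in the present paper to compare it against. One minor slip: since $K=\ker\tau$ acts trivially on time, the isotropy group at every instant is all of $K$, so the relevant group acting on the colliding cluster and on the circle is $K$ itself (and the subgroups $K_{t}^{i}$), not $G_{t_{0}}$.
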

 
 Now we consider the case of choreography constraint. 
  Let $d=\dim V$.
 Assume that  the masses are equal $m_{i}=1 (i=1, \dots, N)$ and 
 take $G$ as  the cyclic group $C_{N}=\langle g \mid g^{N}=1 \rangle$ 
 where 
 \[ \tau(g)=\left(\begin{array}{cc}\cos \frac{2 \pi}{N} & -\sin \frac{2 \pi}{N} \\\sin \frac{2 \pi}{N} & \cos \frac{2 \pi}{N}\end{array}\right), 
 \quad  \rho(g)=\mathrm{Id}_{d}, \quad \sigma(g)=(1~2~\dots~N). \]
 \begin{proposition}[\cite{BT}]\label{prop:BT}
  For every $d\ge 2$, the absolute minimum of $\mathcal{A}^{C_{N}}$ (the action functional under the simple choreography constraint) is attained on a relative equilibrium motion associated 
  with the regular $N$-gon.
 \end{proposition}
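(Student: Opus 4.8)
The plan is to use the choreography constraint to reduce $\mathcal{A}^{C_{N}}$ to a functional of a single loop, and then to bound it from below by a Fourier estimate that is saturated exactly by the rotating regular $N$-gon. First I would rewrite the action. Writing $\theta_{m}=2\pi m/N$ and $x=q_{1}$, the constraint gives $q_{k}(t)=x(t-\theta_{k-1})$; evaluating the kinetic integral with the help of $2\pi$-periodicity, and grouping the pairs $(i,j)$ by the value of $j-i$ after the substitution $s=t-\theta_{i-1}$ (using also $\int_{0}^{2\pi}\frac{dt}{|x(t)-x(t-\theta_{m})|}=\int_{0}^{2\pi}\frac{dt}{|x(t)-x(t-\theta_{N-m})|}$), one obtains
\[
\mathcal{A}^{C_{N}}(q)=\frac{N}{2}\int_{0}^{2\pi}|\dot{x}|^{2}\,dt+\frac{N}{2}\sum_{m=1}^{N-1}\int_{0}^{2\pi}\frac{dt}{|x(t)-x(t-\theta_{m})|}.
\]
Expanding $x(t)=\sum_{n\in\mathbb{Z}}a_{n}e^{int}$ with $a_{n}\in\mathbb{C}^{d}$, $a_{-n}=\overline{a_{n}}$, and $\|a_{n}\|^{2}=a_{n}\cdot\overline{a_{n}}$, the centre-of-mass condition $\sum_{k}q_{k}\equiv0$ becomes $a_{n}=0$ whenever $N\mid n$, and Parseval gives $\int_{0}^{2\pi}|\dot{x}|^{2}\,dt=2\pi\sum_{n}n^{2}\|a_{n}\|^{2}$ and $\|x(\cdot)-x(\cdot-\theta_{m})\|_{L^{2}}^{2}=8\pi\sum_{n}\sin^{2}(\pi nm/N)\,\|a_{n}\|^{2}$.

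Next I would bound each potential term from below: by Jensen's inequality for the convex function $s\mapsto1/s$ followed by Cauchy--Schwarz,
\[
\int_{0}^{2\pi}\frac{dt}{|v(t)|}\ \ge\ \frac{(2\pi)^{2}}{\int_{0}^{2\pi}|v(t)|\,dt}\ \ge\ \frac{(2\pi)^{3/2}}{\|v\|_{L^{2}}},
\]
with equality throughout if and only if $|v|$ is constant. Setting $b_{n}=\|a_{n}\|^{2}$, this yields $\mathcal{A}^{C_{N}}(q)\ge\pi N\,\Phi(b)$, where
\[
\Phi(b)=\sum_{n}n^{2}b_{n}+\frac{1}{2}\sum_{m=1}^{N-1}\Bigl(\sum_{n}\sin^{2}\tfrac{\pi nm}{N}\,b_{n}\Bigr)^{-1/2}
\]
is convex in the nonnegative sequence $b=(b_{n})$ subject to $b_{n}=b_{-n}$ and $b_{n}=0$ for $N\mid n$. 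I would then minimize $\Phi$ via its KKT conditions: the decisive ingredient is the elementary inequality $|\sin(n\phi)|\le|n|\,|\sin\phi|$, which shows that moving weight from the fundamental harmonic $n=\pm1$ onto a harmonic with $|n|\ge2$ multiplies the kinetic cost by $n^{2}$ but cannot multiply the potential gain by more than $n^{2}$; hence the minimum is attained at $b^{*}$ supported on $\{\pm1\}$, with the total weight fixed by a one-variable optimization, and $\min\Phi=3R_{*}^{2}$ with $R_{*}^{3}=\frac14\sum_{m=1}^{N-1}\bigl(\sin\tfrac{\pi m}{N}\bigr)^{-1}$.

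Finally I would verify that the rotating $N$-gon $q_{k}(t)=R_{*}e^{-it}e^{2\pi i(k-1)/N}$ saturates every inequality above: its generating loop $x(t)=R_{*}e^{-it}$ has Fourier support $\{\pm1\}$ realizing $\min\Phi$, and $|x(t)-x(t-\theta_{m})|\equiv2R_{*}\sin(\pi m/N)$ is constant in $t$, so Jensen and Cauchy--Schwarz are equalities; thus $\mathcal{A}^{C_{N}}(\text{$N$-gon})=3\pi NR_{*}^{2}=\inf\mathcal{A}^{C_{N}}$, which is the assertion. Moreover the infimum is attained \emph{exactly} on the family of circular relative equilibria of the regular $N$-gon (for $d\ge3$, on any such circle lying in a $2$-plane of $V$): equality in Jensen forces each $|x(t)-x(t-\theta_{m})|$ to be constant, which together with $\mathrm{supp}(b)\subseteq\{\pm1\}$ rules out non-circular ellipses. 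The step I expect to be the main obstacle is the pair of estimates producing $\Phi$: the lower bound (Jensen and Cauchy--Schwarz, while keeping the factor $n^{2}$ in the kinetic term rather than crudely replacing it by $1$) must be arranged so that the whole chain is sharp precisely at the relative equilibrium, and then the convex minimization of $\Phi$ must be carried out---it is exactly the inequality $|\sin(n\phi)|\le|n|\,|\sin\phi|$ that makes the fundamental harmonic optimal. By contrast, the equality discussion excluding ellipses and the bookkeeping covering $d\ge3$ (where one works throughout with the scalars $\|a_{n}\|^{2}$) are routine.
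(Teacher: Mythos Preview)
The paper does not supply a proof of this proposition: it is quoted verbatim from Barutello--Terracini \cite{BT} and used as a black box in Section~3 to identify the global minimizer of $\mathcal{A}^{H}$ with the rotating square, which is then excluded by the additional constraints defining $\Omega$. So there is no ``paper's own proof'' to compare against.

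That said, your outline is sound and is in fact the strategy of the cited source. The reduction of $\mathcal{A}^{C_N}$ to a single generating loop, the Parseval identities for the kinetic term and for $\|x(\cdot)-x(\cdot-\theta_m)\|_{L^2}^2$, and the chain Jensen $\Rightarrow$ Cauchy--Schwarz giving $\int_0^{2\pi}|v|^{-1}\,dt\ge(2\pi)^{3/2}\|v\|_{L^2}^{-1}$ with equality iff $|v|$ is constant, all combine exactly as you write to yield $\mathcal{A}^{C_N}(q)\ge\pi N\,\Phi(b)$. The minimization of $\Phi$ is the heart of the matter, and your use of $|\sin(n\phi)|\le|n|\,|\sin\phi|$ is the right lever: replacing the mass $b_{n_0}$ at a harmonic $|n_0|\ge2$ by mass $n_0^{2}b_{n_0}$ at $n=\pm1$ leaves the kinetic term unchanged while (weakly) increasing each $\sum_n\sin^2(\pi nm/N)\,b_n$, hence (weakly) decreasing $\Phi$. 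This reduces to a one-variable problem with the minimizer $R_*^3=\tfrac14\sum_{m=1}^{N-1}\csc(\pi m/N)$ and value $3R_*^2$, and the rotating $N$-gon saturates every inequality, as you check. Your equality discussion is also correct: constancy of $|x(t)-x(t-\theta_m)|$ together with Fourier support in $\{\pm1\}$ forces the real and imaginary parts of $a_1$ to be orthogonal and of equal length, i.e.\ $x$ is a uniformly traversed circle. For $d\ge3$ nothing changes, since only the scalars $\|a_n\|^2$ enter.

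If you want to tighten the write-up, the only place that reads as a sketch is the phrase ``minimize $\Phi$ via its KKT conditions'': you do not actually need KKT, since the weight-shifting argument you give immediately afterwards is a direct monotonicity proof and is cleaner than invoking Lagrange multipliers on an infinite-dimensional simplex. Stating it as ``for any admissible $b$ there is a $b'$ supported on $\{\pm1\}$ with $\Phi(b')\le\Phi(b)$'' makes the logic self-contained.
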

 
\section{The existence of the generalized solution}
 
 We consider the planar four-body problem with equal masses:
 \[ V = \mathbb{R}^{2},  \quad N=4, \quad m_{1}=m_{2}=m_{3}=m_{4}=1,\]
 and  take $G$ as
 \[H
= \mathbb{Z}_{2} \times D_{8} =\langle g_{1} \mid g_{1}^{2} =1 \rangle \times
\langle g_{2}, g_{3} \mid g_{2}^{2}=g_{3}^{4}=
 (g_{2}g_{3})^{2}=1\rangle, \]
where 
\begin{align}
\tau(g_{1})&=\mathrm{Id}_{2},& \rho(g_{1})&=- \mathrm{Id}_{2}, &\sigma(g_{1})&=(1 \quad 3)(2 \quad 4), \notag \\
\tau(g_{2})&=\left(\begin{array}{cc}1 & 0 \\0 & -1\end{array}\right),& \rho(g_{2})&=\left(\begin{array}{cc}-1 & 0 \\0 & 1\end{array}\right), &\sigma(g_{2})&=(2 \quad 4), \label{eqn:sym} \\
\tau(g_{3}) &=\left(\begin{array}{cc}0 & -1 \\1 & 0\end{array}\right), & 
\rho(g_{3})&=\mathrm{Id}_{2}, &
\sigma(g_{3})&=(1\quad 2 \quad 3 \quad 4).\notag
\end{align}
This action stands for the symmetry with which 
the super-eight is endowed.
The action by $C_{4}:=\langle g_{3}\rangle$ stands for choreography constraint.
From Proposition  \ref{prop:BT}, 
the minimizer of $\mathcal{A}^{C_{4}}$ is the rotating square.
It is easy to check that 
the action of $H$  satisfies the assumption of Palais principle.
 Since $\Lambda^{H}$ is a subset of $\Lambda^{C_{4}}$ 
 and the rotating square is included in $\Lambda^{H}$, 
the minimizer of  $\mathcal{A}^{H}$ is the rotating square.
In order to obtain the super-eight solution, 
we need to attain another local minimizer.
Define $\Omega$ by
\[\Omega=\left\{ q \in \Lambda^{H} \mid
 P_{y}q_{1}(0) \ge 0, \quad P_{x}q_{2}(0) \ge 0, \quad P_{x} q_{1}(\pi/4) \ge 0, \quad P_{y}q_{1}(\pi/4)  \le 0 \right. \}.  \]
 and let $\hat{\Omega}=\Omega \cap \hat{\Lambda}^{H}$.
 $\hat{\Omega}$ is an open set of $\hat{\Lambda}^{H}$.
 $\Omega$ does not include the rotating square.
If there is  a minimizer of 
 $\mathcal{A}|_{\Omega}$ (not on the boundary of $\Omega$), 
 it is a local minimizer of $\mathcal{A}^{G}$, and safisfies \eqref{par} and \eqref{sym}.
Since $\mathcal{A}|_{\Omega}$ is coersive, 
 there is a minimizer of $\mathcal{A}|_{\Omega}$.
 We need to prove that the minimizer does not belongs to its boundary $\partial \Omega$
 which corresponds to the loops with a collision.
 
 For $q(t)=(q_{1}(t), q_{2}(t), q_{3}(t), q_{4}(t)) \in \Omega$, 
 $\mathcal{A}(q)$ can be written as
\[ \mathcal{A}(q)= 16\int_{0}^{\pi/4} 
\frac{1}{2}( |\dot{q}_{1}|^{2}+|\dot{q}_{2}|^{2} )+
\frac{1}{|q_{1}-q_{2}|}+\frac{1}{|q_{1}+q_{2}|}+\frac{1}{2|q_{1}|}+\frac{1}{2|q_{2}|} dt,\]
since the configuration always holds $q_{1}=-q_{3}, q_{2}=-q_{4}$.
Hence it is sufficient to consider $q_{1}$ and $q_{2}$.
 Let $\cal Y$ be the 
set of configurations without collision:
\[ \mathcal{Y}= \left\{ (q_{1},q_{2}) \in (\mathbb{R}^{2})^{2} \left| q_{1} \neq 0, q_{2} \neq 0, q_{1} \neq q_{2}, 
q_{1} \neq -q_{2}
\right. \right\}.  
\]
Define a set $\Gamma$ and
$\hat{\Gamma}$ by 
\begin{eqnarray*}
\Gamma&=&\left\{ (q_{1}(t), q_{2}(t)) \in H^{1}([0, \pi/4],  (\mathbb{R}^{2})^{2}) \left|
\begin{split} 
P_{x} q_{1}(0)= P_{y} q_{2}(0)=0, \\
 P_{y} q_{1}(0)\ge  0, \quad P_{x} q_{2}(0) \ge 0, \\
R_{y} q_{1}(\pi/4)=q_{2}(\pi/4), \\
P_{x} q_{1}(\pi/4) \ge 0, \quad P_{y} q_{1}(\pi/4) \le 0
\end{split}\right. \right\},  \\
\hat{\Gamma}&=&\Gamma \cap H^{1} ([0, \pi/4], \mathcal{Y}). 
\end{eqnarray*}

Consider the map $\hat{\Omega} \to \hat{\Gamma}$ 
by corresponding $q=(q_{1}(t), q_{2}(t), q_{3}(t), q_{4}(t)) (t \in [0, 2 \pi])$ in $\hat{\Omega}$ 
to $\gamma(t):=(q_{1}(t), q_{2}(t)) (t \in [0, \pi/4])$. 
This map is bijective.
  We can consider the action functional on $\hat{\Gamma}$ defined by 
 \[\mathcal{J}(\gamma)=\int_{0}^{\pi/4} \frac{1}{2}( |\dot{q}_{1}|^{2}+|\dot{q}_{2}|^{2} )+
\frac{1}{|q_{1}-q_{2}|}+\frac{1}{|q_{1}+q_{2}|}+\frac{1}{2|q_{1}|}+\frac{1}{2|q_{2}|}dt\]
 instead of $\mathcal{A}$.
 
 In order to eliminate the possibility of a collision, 
 we 
 add  the strong force part to  the Lagrangian:
\begin{align*}
 L^{\varepsilon}(\gamma, \dot{\gamma}) 
&= 
\frac{1}{2}( |\dot{q}_{1}|^{2}+|\dot{q}_{2}|^{2} )+
\frac{1}{|q_{1}-q_{2}|}+\frac{1}{|q_{1}+q_{2}|}+\frac{1}{2|q_{1}|}+\frac{1}{2|q_{2}|}  \\
&\quad +\frac{\varepsilon}{|q_{1}-q_{2}|^{2}}+\frac{\varepsilon}{|q_{1}+q_{2}|^{2}}+\frac{\varepsilon}{2|q_{1}|^{2}}
+\frac{\varepsilon}{2|q_{2}|^{2}}
 \end{align*}
and  consider  the action functional for the Lagrangian: 
 \begin{align*}
 \mathcal{J}^{\varepsilon}(\gamma)&= \int_{0}^{\pi/4} L^{\varepsilon}(\gamma, \dot{\gamma}) dt. 
\end{align*}
We can easily check $\mathcal{X}^{H}=\{0\}$, and hence 
$\mathcal{A}^{H}$ is coercive. 
Since the restricted functional $\mathcal{A}|_{\Omega}$ is also coercive, 
$\mathcal{J}$ is coercive.
As $\mathcal{J}^{\varepsilon}(\gamma)$ is greater than $\mathcal{J}(\gamma)$ for a fixed  $\varepsilon>0$, 
$\mathcal{J}^{\varepsilon}(\gamma)$ is also coercive.

It is known (for instance see \cite{G77}) that
the value of an action functional with a strong force diverges as the  path converges to a collision path.
Therefore the minimizer $\gamma^{\varepsilon}$ of $\mathcal{J}^{\varepsilon}$ for $\varepsilon >0$ is attained in $\hat{\Gamma}$.
Since 
\[ \mathcal{J}(\gamma^{\varepsilon}) < \mathcal{J}^{\varepsilon}(\gamma^{\varepsilon}) \le \mathcal{J}^{1}(\gamma^{1}),\]
for $0<\varepsilon<1$, 
$\mathcal{J}(\gamma^{\varepsilon}) ~(0<\varepsilon<1)$ is bounded. 
Therefore
the set $\{ \gamma^{\varepsilon} \mid 0< \varepsilon < 1\}$ is bounded in $\Gamma$
because of the coercivity of $\mathcal{J}$. 
There exists a  subsequence $\varepsilon_{n} \searrow 0 (n \to \infty)$ such that $\gamma^{\varepsilon_{n}}$  converges to a point
 $\gamma^{0}$ in $\Gamma$.

\begin{proposition}\label{p:min}
$\mathcal{J}(\gamma^{0}) = \inf_{\gamma \in \Gamma}\mathcal{J}(\gamma)$.
\end{proposition}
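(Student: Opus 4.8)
The plan is to show that $\gamma^{0}$, being the weak $H^{1}$-limit of the strong-force minimizers $\gamma^{\varepsilon_{n}}$, inherits minimality for the unregularized functional $\mathcal{J}$ on the (possibly collisional) class $\Gamma$. There are three ingredients: weak lower semicontinuity of $\mathcal{J}$ along the sequence, an upper bound coming from the defining minimality of the $\gamma^{\varepsilon_{n}}$ together with $\varepsilon_{n}\to 0$, and a density fact relating $\inf_{\Gamma}\mathcal{J}$ to $\inf_{\hat{\Gamma}}\mathcal{J}$.

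First I would record that, since $\{\gamma^{\varepsilon_{n}}\}$ is bounded in $\Gamma=H^{1}([0,\pi/4],(\mathbb{R}^{2})^{2})$ and $H^{1}$ embeds compactly in $C^{0}$ on a bounded interval, after passing to a subsequence $\gamma^{\varepsilon_{n}}\to\gamma^{0}$ weakly in $H^{1}$ and uniformly on $[0,\pi/4]$; in particular the boundary conditions defining $\Gamma$, being closed under uniform convergence, hold for $\gamma^{0}$. Next, $\mathcal{J}$ is weakly lower semicontinuous: its kinetic part $\tfrac12\int_{0}^{\pi/4}(|\dot q_{1}|^{2}+|\dot q_{2}|^{2})\,dt$ is convex and strongly continuous, hence weakly lsc, while the Newtonian potential $U(q_{1},q_{2})=\tfrac{1}{|q_{1}-q_{2}|}+\tfrac{1}{|q_{1}+q_{2}|}+\tfrac{1}{2|q_{1}|}+\tfrac{1}{2|q_{2}|}$ is nonnegative and lower semicontinuous on $(\mathbb{R}^{2})^{2}$ (with value $+\infty$ on collisions), so $\liminf_{n}U(\gamma^{\varepsilon_{n}}(t))\ge U(\gamma^{0}(t))$ for every $t$ and Fatou's lemma gives $\liminf_{n}\int_{0}^{\pi/4}U(\gamma^{\varepsilon_{n}})\,dt\ge\int_{0}^{\pi/4}U(\gamma^{0})\,dt$. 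Since $\mathcal{J}(\gamma^{\varepsilon_{n}})\le\mathcal{J}^{\varepsilon_{n}}(\gamma^{\varepsilon_{n}})$, this yields $\mathcal{J}(\gamma^{0})\le\liminf_{n}\mathcal{J}(\gamma^{\varepsilon_{n}})\le\liminf_{n}\mathcal{J}^{\varepsilon_{n}}(\gamma^{\varepsilon_{n}})$.

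For the matching upper bound, fix any $\gamma\in\hat{\Gamma}$. Because $\gamma$ is a continuous path on the compact interval $[0,\pi/4]$ with values in the open set $\mathcal{Y}$, its image is a compact subset of $\mathcal{Y}$, so the four mutual distances are bounded below by a positive constant and $I(\gamma):=\int_{0}^{\pi/4}\left(\tfrac{1}{|q_{1}-q_{2}|^{2}}+\tfrac{1}{|q_{1}+q_{2}|^{2}}+\tfrac{1}{2|q_{1}|^{2}}+\tfrac{1}{2|q_{2}|^{2}}\right)dt$ is finite. Minimality of $\gamma^{\varepsilon_{n}}$ for $\mathcal{J}^{\varepsilon_{n}}$ on $\hat{\Gamma}$ gives $\mathcal{J}^{\varepsilon_{n}}(\gamma^{\varepsilon_{n}})\le\mathcal{J}^{\varepsilon_{n}}(\gamma)=\mathcal{J}(\gamma)+\varepsilon_{n}I(\gamma)$; letting $n\to\infty$ and combining with the previous paragraph, $\mathcal{J}(\gamma^{0})\le\mathcal{J}(\gamma)$. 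Since $\gamma\in\hat{\Gamma}$ was arbitrary, $\mathcal{J}(\gamma^{0})\le\inf_{\hat{\Gamma}}\mathcal{J}$.

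It remains to pass from $\hat{\Gamma}$ to $\Gamma$, i.e. to show $\inf_{\Gamma}\mathcal{J}=\inf_{\hat{\Gamma}}\mathcal{J}$; as $\hat{\Gamma}\subset\Gamma$ only ``$\le$'' is obvious, and the content is that collisional paths cannot do better. I would prove this by an approximation argument: for $\gamma\in\Gamma$ with $\mathcal{J}(\gamma)<\infty$ one perturbs $\gamma$ slightly in $H^{1}$ to a collision-free path with action arbitrarily close to $\mathcal{J}(\gamma)$, while paths with $\mathcal{J}(\gamma)=\infty$ are irrelevant (the infimum being finite by the previous step). In the planar problem this is favorable because the collision locus $(\mathbb{R}^{2})^{2}\setminus\mathcal{Y}$ is a finite union of $2$-planes, hence of codimension $2$, so a one-parameter family $t\mapsto\gamma(t)+\lambda v$ avoids it for a.e.\ $\lambda$; using $\mathcal{J}(\gamma)<\infty$ one then controls the potential integral as $\lambda\to0$ by a dominated-convergence/uniform-integrability estimate. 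Combining everything, $\mathcal{J}(\gamma^{0})\le\inf_{\hat{\Gamma}}\mathcal{J}=\inf_{\Gamma}\mathcal{J}\le\mathcal{J}(\gamma^{0})$ since $\gamma^{0}\in\Gamma$, which proves the proposition. The main obstacle is precisely this last density step: making the perturbation simultaneously collision-removing and action-preserving needs care, although the codimension-$2$ structure in the plane makes it essentially standard (it is the same mechanism behind the reduction to collision-free competitors in Chenciner--Montgomery-type arguments); the rest is routine lower-semicontinuity bookkeeping.
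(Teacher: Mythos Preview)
Your first two steps---weak lower semicontinuity of $\mathcal{J}$ along $\gamma^{\varepsilon_n}$, and the upper bound $\mathcal{J}(\gamma^0)\le\mathcal{J}(\gamma)$ for every $\gamma\in\hat\Gamma$ coming from $\mathcal{J}^{\varepsilon_n}(\gamma^{\varepsilon_n})\le\mathcal{J}^{\varepsilon_n}(\gamma)=\mathcal{J}(\gamma)+\varepsilon_n I(\gamma)$---are exactly the paper's argument, and your splitting of the lsc into a convexity part (kinetic) and a Fatou part (potential) is in fact cleaner than the paper's direct application of Fatou to the full Lagrangian.

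The genuine difference is in the density step $\inf_{\hat\Gamma}\mathcal{J}=\inf_{\Gamma}\mathcal{J}$. The paper does \emph{not} use a codimension-two transversality argument: instead it assumes a minimizer $\bar\gamma\in\partial\Gamma$ exists, invokes the Sundman asymptotics $\bar\gamma(t)=c(t-t_0)^{2/3}+O(t-t_0)$ (available because a minimizer is a genuine solution away from collision times), and builds an explicit piecewise-linear bump $\chi$ supported near each collision time with $\mathcal{J}(\bar\gamma+\chi)-\mathcal{J}(\bar\gamma)=O((\delta t)^{2/3})$. This exploits the solution structure and gives a concrete action estimate. Your route---shift by $\lambda v$ and appeal to codimension two---is a different idea, and as you note it is the delicate point. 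One concrete obstruction you should address: the affine boundary constraints defining $\Gamma$ ($P_xq_1(0)=0$, $P_yq_2(0)=0$, $R_yq_1(\pi/4)=q_2(\pi/4)$) force any \emph{constant} $v\in(\mathbb{R}^2)^2$ satisfying them to be zero, so the perturbation cannot be constant. With a time-dependent $v$ one can handle interior collisions, but at $t=0$ and $t=\pi/4$ the constrained boundary configurations form a two-parameter family in which the binary collisions (e.g.\ $q_1(0)=0$) are codimension one, not two, so the transversality heuristic does not directly exclude them; you would need a separate argument there. The paper's Sundman-based local surgery avoids this issue entirely because it works collision-by-collision and uses only the asymptotic profile, not any genericity.
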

\begin{proof}
$\mathcal{J}^{\varepsilon_{n}}(\gamma^{\varepsilon_{n}})$ monotonically decreases as $n$ diverges, 
and hence the limit of the sequence exists.
From Fatou's lemma, we have
 \begin{align*}
  \lim_{n\to \infty} \mathcal{J}^{\varepsilon_{n}}(\gamma^{\varepsilon_{n}}) &=\lim_{n \to \infty} \int_{0}^{\pi/4} L^{\varepsilon_{n}}(\gamma^{ \varepsilon_{n}}, \dot{\gamma}^{\varepsilon_{n}} ) dt \\
  & \ge  \int_{0}^{\pi/4} \liminf_{n \to \infty} L^{\varepsilon_{n}}(\gamma^{ \varepsilon_{n}}, \dot{\gamma}^{\varepsilon_{n}} ) dt \\
  &= \int_{0}^{\pi/4} L^{0}(\gamma^{0}, \dot{\gamma}^{0}) dt \\
  &= \mathcal{J}(\gamma^{0}).
  \end{align*}
  For  any $\gamma \in \hat{\Gamma}$, 
the inequality 
\[\mathcal{J}(\gamma^{0})\le \mathcal{J}^{\varepsilon_{n}}(\gamma^{\varepsilon_{n}}) \le 
\mathcal{J}^{\varepsilon_{n}}(\gamma)\]
satisfies.

On the other hand, 
for arbitrary fixed $\gamma \in \hat{\Gamma}$ and $t$, $L^{\varepsilon_{n}}(\gamma (t), \dot{\gamma}(t))$  monotonically decreases as $n$ increases.
Hence 
\begin{align*}
\lim_{n \to \infty} \mathcal{J}^{\varepsilon_{n}}(\gamma) &= 
\lim_{n \to \infty} \int_{0}^{\pi/4} L^{\varepsilon_{n}} (\gamma, \dot{\gamma}) dt \\
&= 
 \int_{0}^{\pi/4}\lim_{n \to \infty} L^{\varepsilon_{n}} (\gamma, \dot{\gamma}) dt \\
 &= 
 \int_{0}^{\pi/4} L^{0} (\gamma, \dot{\gamma}) dt \\
 &=\mathcal{J}(\gamma) 
\end{align*}
Therefore we get 
\[ \mathcal{J}(\gamma^{0}) \le \lim_{n \to \infty} \mathcal{J}^{\varepsilon_{n}} (\gamma^{\varepsilon_{n}}) 
\le \lim_{n\to \infty} \mathcal{J}^{\varepsilon_{n}} (\gamma) =\mathcal{J}(\gamma).\]
Thus 
\[ \mathcal{J}(\gamma^{0})  \le \inf_{\gamma \in \hat{\Gamma}} \mathcal{J}(\gamma).\]
  
Assume that $\bar{\gamma} \in \partial \Gamma$ minimizes $\mathcal{J}$. 
From the Sundman estimate, 
the asymptotic behavior near collision time $t_{0}$ can be represented by 
\[ \bar{\gamma}(t)=c (t-t_{0})^{2/3} + O(t-t_{0}). \]
where $c \in (\mathbb{R}^{2})^{2} \backslash \mathcal{Y}$.
If $0<t_{0}< \pi/4$, for $d \in (\mathbb{R}^{2})^{2}$ and $\delta t >0$, 
we define 
\[ \chi (t)= \begin{cases} 
0 & (0 \le t < t_{0}-\delta t) \\
d (t-t_{0}+\delta t) & ( t_{0}-\delta t \le t < t_{0} )\\
-d (t-t_{0}-\delta t) & (t_{0}\le t < t_{0} +\delta t )\\
0 &  (t_{0} +\delta t  \le t \le \pi/4).
\end{cases}\]
We similarly define $\chi$ in the case of  $t_{0}=0$ or $\pi/4$. 
Take $d$ such that 
$\bar\gamma(t_{0})+\chi(t_{0}) \in \mathcal{Y}$ for small $\delta t>0$.
From easy computation, it follows that 
\[ \mathcal{J}(\bar{\gamma}+\chi)- \mathcal{J}(\bar{\gamma}) =O((\delta t)^{2/3}).\]
Such an estimate have been gotten when one used the local estimate. 
The collision times are isolated 
and we can make the same  modification at all collision time.
The functional $\mathcal{J}$ can have as close  value to $ \mathcal{J}(\bar{\gamma}) $  in $\hat{\Gamma}$ 
 as one need.
This follows that 
\[ \inf_{\gamma \in \hat {\Gamma}} \mathcal{J}(\gamma)=\inf_{\gamma \in \Gamma} \mathcal{J}(\gamma).\]

  \end{proof} 
  
  Thus  
  if $\gamma^{0} \in \hat{\Gamma}$, $\gamma^{0}$ is a minimizer of $\mathcal{J}$, 
  and hence $\gamma^{0}$ is a solution.
We will prove that $\gamma \in \hat{\Gamma}$ in the following two sections.

\section{Elimination of  total collisions}
\begin{proposition}\label{pr:totcol}
$\gamma^{0}$ has no total collision, 
i.e. for any $t$, $\gamma^{0}(t) \neq 0$.
\end{proposition}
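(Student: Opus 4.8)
The plan is to apply the global estimate described in the introduction. Suppose, for contradiction, that $\gamma^{0}=(q_{1},q_{2})$ has a total collision, i.e.\ $\gamma^{0}(t_{0})=0$ for some $t_{0}\in[0,\pi/4]$; in the original loop this means $q_{1}(t_{0})=q_{2}(t_{0})=q_{3}(t_{0})=q_{4}(t_{0})=0$, and when $t_{0}\in(0,\pi/4)$ the $\tau$-action of $g_{2}$ and $g_{3}$ forces total collisions at the eight times $\pm t_{0}+k\pi/2$, $k=0,1,2,3$, per period. By Proposition~\ref{p:min}, $\mathcal J(\gamma^{0})=\inf_{\gamma\in\Gamma}\mathcal J(\gamma)$. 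So it suffices to produce (i) a constant $c_{0}$ that is a lower bound for $\mathcal J(\gamma)$ over all $\gamma\in\Gamma$ having a total collision, and (ii) a collision-free test path $\tilde\gamma\in\hat\Gamma$ with $\mathcal J(\tilde\gamma)<c_{0}$; then $c_{0}\le\mathcal J(\gamma^{0})=\inf_{\Gamma}\mathcal J\le\mathcal J(\tilde\gamma)<c_{0}$, a contradiction.

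For the lower bound I would first invoke the Sundman--Sperling asymptotics at $t_{0}$: $|\gamma^{0}(t)|\sim c\,|t-t_{0}|^{2/3}$ and $\gamma^{0}(t)/|\gamma^{0}(t)|$ tends, as $t\to t_{0}$, to a normalized central configuration $s_{0}$ of the four equal masses compatible with the antipodal constraint $q_{3}=-q_{1},\ q_{4}=-q_{2}$, so that $\gamma^{0}$ near $t_{0}$ is asymptotic to a homothetic ejection (resp.\ collision) solution $r(t)s_{0}$. Since the contribution of such an arc over a window of length $\delta$ is only $O(\delta^{1/3})$, the estimate must be made over the whole of $[0,\pi/4]$: the relevant fact is that for the four-body problem ($N=4\ge 3$, non-collinear central configurations available) a collision--ejection arc is never a minimizer of the action among paths with the same endpoints, so one can bound $\mathcal J(\gamma^{0})$ from below by the action of the homothetic motion through $s_{0}$ compatible with the boundary data of $\Gamma$ -- a quantity expressible through $U(s_{0})$ and the moment of inertia of $s_{0}$ and minimized over the finite list of admissible central configurations -- and take that value (or any convenient smaller but still explicit quantity) as $c_{0}$. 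For the upper bound I would choose $\tilde\gamma\in\hat\Gamma$ explicitly, e.g.\ a path interpolating between a rhombus at $t=0$ and a rectangle at $t=\pi/4$ and shadowing Gerver's numerical orbit, and verify $\mathcal J(\tilde\gamma)<c_{0}$ by a direct estimate.

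The main obstacle is precisely this lower bound: since the near-collision part of the action tends to $0$ as the time window shrinks, one cannot simply localize at $t_{0}$, and one must genuinely use the non-minimality of collision--ejection arcs in the four-body problem -- or, failing that, a sharp and honest comparison between the homothetic action and a concrete test curve. A secondary point is the boundary case $t_{0}\in\{0,\pi/4\}$: there the colliding arc is one-sided, the comparison path (and the cut-off variation $\chi$ of Proposition~\ref{p:min}) must respect the boundary conditions defining $\Gamma$, and the $\tau$-action of $g_{2},g_{3}$ replicates the collision to four times per period rather than eight; none of this affects the conclusion, since a total collision anywhere still produces a loop whose action exceeds $c_{0}$, contradicting $\mathcal J(\gamma^{0})\le\mathcal J(\tilde\gamma)<c_{0}$.
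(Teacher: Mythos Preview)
Your overall plan---a global level comparison, with an explicit lower bound $c_0$ for the action of any total-collision path and an explicit test path giving an upper bound below $c_0$---is exactly the paper's strategy. But the way you propose to obtain the lower bound does not actually yield a number, and the tools you invoke are not the right ones.

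Sundman--Sperling asymptotics describe the behaviour of a \emph{solution} near a collision; they are a statement about $\gamma^{0}$, not an inequality valid for every $\gamma\in\Gamma$ with a total collision, and as you yourself note they contribute only $O(\delta^{1/3})$ over a short window. The ``non-minimality of collision--ejection arcs'' (Marchal/Chenciner) tells you that such a path can be improved, hence is not a minimizer; it does \emph{not} furnish an explicit constant $c_0$ bounding its action from below. So neither ingredient gives you the number you need.

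The missing idea is a radial reduction together with Gordon's Kepler estimate. Write $\gamma(t)=r(t)s(t)$ with $r=\sqrt{|q_1|^2+|q_2|^2}$ and $|s|=1$; then $|\dot\gamma|^2=\dot r^2+r^2|\dot s|^2\ge \dot r^2$, and since the potential is homogeneous of degree $-1$ one has $U(\gamma)=r^{-1}U(s)\ge r^{-1}\min_{|s|=1}U(s)$. The minimum of $U$ on the unit sphere (under the antipodal constraint $q_3=-q_1$, $q_4=-q_2$) is $4+\sqrt 2$, attained at the square. Hence for any $\gamma$ with $r(t_0)=0$,
\[
\mathcal J(\gamma)\ \ge\ \int_0^{\pi/4}\Bigl(\tfrac12\dot r^2+\frac{4+\sqrt 2}{r}\Bigr)\,dt,
\]
and the right-hand side is a one-dimensional collinear Kepler functional whose infimum over paths vanishing somewhere is Gordon's explicit value $\tfrac{3}{2}\pi^{2/3}(4+\sqrt 2)^{2/3}(\pi/4)^{1/3}\approx 9.15$. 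This is the $c_0$. The paper then exhibits a simple piecewise-linear test path with $\mathcal J(\tilde\gamma)\lesssim 4.89<c_0$, and the contradiction follows. Your worries about the boundary cases $t_0\in\{0,\pi/4\}$ disappear in this argument: the radial inequality and Gordon's bound use only $r(t_0)=0$ for some $t_0\in[0,\pi/4]$.
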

\begin{lemma}\label{lem:tot}
For any  $\gamma \in \Gamma$ with  a total collision,  
 $\mathcal{J}(\gamma)$ is greater than 9.
\end{lemma}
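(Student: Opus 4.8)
The plan is to bound $\mathcal{J}(\gamma)$ from below for a path $\gamma=(q_1,q_2)\in\Gamma$ having a total collision at some $t_0\in[0,\pi/4]$ (so $q_1(t_0)=q_2(t_0)=0$, hence also $q_1(t_0)\pm q_2(t_0)=0$) by isolating the Keplerian sub-action attached to each of the four singular directions of the reduced potential and applying a Gordon--Sundman estimate to each. Using the identity $|\dot q_1-\dot q_2|^2+|\dot q_1+\dot q_2|^2=2(|\dot q_1|^2+|\dot q_2|^2)$, for every $\beta\in(0,\tfrac12)$ one may write $\tfrac12(|\dot q_1|^2+|\dot q_2|^2)=\tfrac{1-2\beta}{2}|\dot q_1|^2+\tfrac{1-2\beta}{2}|\dot q_2|^2+\tfrac{\beta}{2}|\dot q_1-\dot q_2|^2+\tfrac{\beta}{2}|\dot q_1+\dot q_2|^2$, so that $\mathcal{J}(\gamma)$ becomes the sum of the actions of the four Lagrangians $\tfrac{1-2\beta}{2}|\dot x|^2+\tfrac1{2|x|}$ (for $x=q_1$ and $x=q_2$) and $\tfrac{\beta}{2}|\dot x|^2+\tfrac1{|x|}$ (for $x=q_1-q_2$ and $x=q_1+q_2$). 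Each of these four curves passes through the origin at $t_0$.

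The engine is the classical Gordon estimate: for $m,\mu>0$ and $k\ge 2$, every $x\in H^1([a,b],\mathbb{R}^k)$ with $x(a)=x(b)=0$, $x\not\equiv 0$, satisfies $\int_a^b(\tfrac m2|\dot x|^2+\tfrac\mu{|x|})\,dt\ge\tfrac32(2\pi)^{2/3}m^{1/3}\mu^{2/3}(b-a)^{1/3}$, attained on the rectilinear Kepler (ejection--collision) orbit of period $b-a$; a collision-to-turning-point half-arc over time $\tau$ costs at least half of this value with $b-a$ replaced by $2\tau$. I would state this precisely and either cite Gordon (and the ``local estimate'' literature) or give the short proof from the Sundman asymptotics and the explicit Keplerian action. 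To make the count effective I would pass to the full $H$-symmetric $2\pi$-periodic loop $q$ associated with $\gamma$, for which $\mathcal{A}(q)=16\,\mathcal{J}(\gamma)$: under $H$ a total collision at $t_0$ forces a total collision at every point of the $\tau(H)$-orbit of $t_0$, namely $\{\pm t_0+k\pi/2:k=0,1,2,3\}$ --- eight equally spaced times when $t_0\in(0,\pi/4)$ and four when $t_0\in\{0,\pi/4\}$. On each arc cut out by these times every one of the four subsystems performs a complete ejection--collision, so each of the four sub-actions over one period is at least eight (resp.\ four) Gordon terms; summing and optimizing over $\beta$ yields a lower bound for $\mathcal{A}(q)/16=\mathcal{J}(\gamma)$. (One may also argue directly on $[0,\pi/4]$ using the single forced collision there, but passing to the full loop gives the stronger constant.)

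The step I expect to be the real obstacle is pushing the resulting constant up to $9$. The count is least favorable in the regime $t_0\to\{0,\pi/4\}$, where the forced collisions degenerate to four and the arcs to length $\pi/2$, and there the raw Gordon bound --- even optimized in $\beta$ and using only the crude inequality $U(v)\,\|v\|\ge 2\sqrt2$ --- comes out short of $9$. To make up the deficit I would feed sharper data into the estimate: by the Sundman--Sperling asymptotics one has $\gamma(t)=c\,(t-t_0)^{2/3}+O(t-t_0)$ with $c$ a central configuration of the symmetric four-body problem, and among the configurations compatible with $q_3=-q_1,\ q_4=-q_2$ the square minimizes $U/\|\cdot\|$ (with value $2+\sqrt2$ rather than $2\sqrt2$), so the effective Keplerian coupling near each collision is the larger one; combined with the fact that the configurations prescribed by $\Gamma$ at the endpoints (rhomboidal at $t=0$, rectangular at $t=\pi/4$) are not central configurations, so that the loop cannot collapse and re-expand homothetically and must spend extra action on the angular variable, this should lift the bound strictly above $9$. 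Carrying this sharpening through quantitatively, and disposing of the finitely many remaining patterns of collision times, is the computational heart of the lemma.
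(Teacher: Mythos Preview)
The paper's argument is a one-step radial reduction, not a four-way split. Write $\gamma(t)=r(t)\,(s_1(t),s_2(t))$ with $r=(|q_1|^2+|q_2|^2)^{1/2}$; then
\[
\mathcal{J}(\gamma)=\int_0^{\pi/4}\Bigl(\tfrac12\dot r^{\,2}+\tfrac12 r^2\bigl(|\dot s_1|^2+|\dot s_2|^2\bigr)+r^{-1}U(s_1,s_2)\Bigr)\,dt
\ \ge\ \int_0^{\pi/4}\Bigl(\tfrac12\dot r^{\,2}+U_0\,r^{-1}\Bigr)\,dt,
\]
where $U_0=\min_{|s_1|^2+|s_2|^2=1}U(s_1,s_2)$, attained at the regular square configuration (the paper records $U_0=4+\sqrt2$). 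Since $r(t_0)=0$, a single application of Gordon's collinear-Kepler estimate with $\alpha=U_0$ and $T_0=\pi/4$ produces the numerical lower bound in one stroke. No parameter $\beta$, no passage to the full $2\pi$-loop, no Sundman asymptotics, no case analysis on $t_0$.

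Your four-way kinetic split is not wrong, but it is a detour that can at best reproduce this same number and never exceed it. Indeed, optimising $2(1-2\beta)^{1/3}(1/2)^{2/3}+2\beta^{1/3}$ over $\beta\in(0,\tfrac12)$ gives $\beta=1/(2+\sqrt2)$, and the resulting sum of four Gordon terms equals exactly $\tfrac{3}{2}\pi^{2/3}U_0^{2/3}(\pi/4)^{1/3}$, the radial bound; this is no accident, since both your inequality and the paper's are saturated by the same path, the homothetic collapse of the square. So the ``real obstacle'' you identify---pushing the constant up---is an artefact of the indirect route. The observation you relegate to your final paragraph, that the square minimises the normalised potential, \emph{is} the proof; it should be the opening move, not a patch. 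The programme of forcing extra collision times via the $\tau(H)$-orbit of $t_0$ and then invoking central-configuration asymptotics is therefore unnecessary, and in any case you have only sketched it.
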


\begin{proof} 

We first consider the collinear Kepler problem: 
\[ \frac{d^{2}\xi}{dt^{2}}=-\alpha \xi^{-2} \qquad \xi \in \mathbb{R}\]
where $\alpha>0$ is a constant.
For any $T_{0}>0$, there is a unique solution such that $\xi_{\alpha, T_{0}}(0)=\dot{\xi}_{\alpha, T_{0}}(T_{0})=0$ and 
that $\dot \xi_{\alpha, T_{0}}(t) > 0$ for any $t \in (0, T_{0})$.
This is degenerate (as the eccentricity converges to 1)  Kepler motion with period $2T_{0}$.
It is known \cite{G77} that 
$\xi_{\alpha, T_{0}}$ minimizes the action functional for the collinear Kepler motion
\[ \int_{0}^{T_{0}} \frac{1}{2} \dot{\xi}^{2}+\frac{\alpha}{\xi} dt \] 
on $\{ \gamma \in H^{1}([0, T_{0}], \mathbb{R}) \mid \gamma(t) =0 \mathrm{~for~some~} t \in [0, T_{0}]\}$ 
and that the minimum value is 
\begin{equation}\label{valkep}
 2^{-1} \cdot 3\pi^{2/3} \alpha^{2/3}T_{0}^{1/3}.
 \end{equation}

Now we move back to the variational problem of $\mathcal{J}$.
Assume that $\gamma \in \Gamma$ has a total collision at $t_{0} \in [0, \pi/4]$.
Denote $\gamma(t)=(q_{1}(t), q_{2}(t))=r(t)(s_{1}(t), s_{2}(t))$
where 
$r(t)=\sqrt{|q_{1}(t)|^{2}+|q_{2}(t)|^{2}}$.
It is easily seen that the minimum value of $U(s_{1}, s_{2})$ on $\{(s_{1}, s_{2}) \in (\mathbb{R}^2)^{2} \mid 
|s_{1}|^{2}+|s_{2}|^{2}=1\}$ is  $4+\sqrt{2}$
and that it is attained at regular square configuration($|s_{1}|=|s_{2}|, s_{1} \perp s_{2}$).
The minimizing orbit  with a total collision is homothetic orbit with this configuration(Figure \ref{fig:tocol}).
\begin{figure}
\begin{center}
\includegraphics[width=2in]{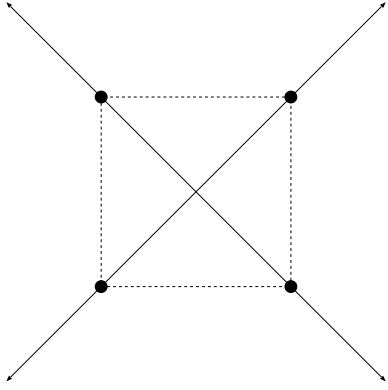}
\caption{Minimizing total collision orbit}
\label{fig:tocol}
\end{center}
\end{figure}
Since $r(t_{0})=0$ for some $t_{0} \in [0, \pi/4]$, it  follows from  \eqref{valkep} that 
\begin{align*}
\mathcal{J}(\gamma^{0}) &=\int_{0}^{\pi/4} \frac{1}{2} (\dot{r}^{2} +r^{2} (|\dot{s}_{1}|^{2}+|\dot{s}_{2}|^{2})) + r^{-1} U(s_{1}, s_{2}) dt \\
& \ge \int_{0}^{\pi/4} \frac{1}{2} \dot{r}^{2} + (4+\sqrt{2}) r^{-1}  dt \\
&\ge 2^{-4/3}\cdot 3 (1+2\sqrt{2})^{2/3} \pi \\
&\approx 9.153.  
\end{align*} 
\end{proof}

 \begin{lemma} \label{lem:testps}
 \begin{equation}
\label{esttest}
\inf_{\hat{\Gamma}}  \mathcal{J} < 5. 
  \end{equation}
 \end{lemma}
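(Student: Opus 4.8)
The plan is to exhibit a single explicit test path $\gamma \in \hat{\Gamma}$ and to bound $\mathcal{J}(\gamma)$ from above by a number strictly less than $5$. The value $5$ is comfortably below the total-collision threshold $\approx 9.15$ of Lemma \ref{lem:tot}, so the construction has a lot of room; the only real constraints are (i) $\gamma$ must satisfy the boundary conditions defining $\Gamma$, namely $P_x q_1(0)=P_y q_2(0)=0$, $P_y q_1(0)\ge 0$, $P_x q_2(0)\ge 0$, $R_y q_1(\pi/4)=q_2(\pi/4)$, $P_x q_1(\pi/4)\ge 0$, $P_y q_1(\pi/4)\le 0$; and (ii) $\gamma$ must stay in $\mathcal{Y}$, i.e. avoid $q_1=0$, $q_2=0$, $q_1=q_2$, $q_1=-q_2$ on all of $[0,\pi/4]$. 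A natural choice is to take $q_1(t)$ moving along a small arc near a point on the positive $y$-axis and $q_2(t)$ along a small arc near a point on the positive $x$-axis, with both arcs of comparable size, so that at $t=0$ the configuration is rhomboidal (the four bodies $q_1,q_2,-q_1,-q_2$ form a rhombus with vertices on the axes) and at $t=\pi/4$ it is a rectangle with $q_1$ in the fourth quadrant and $q_2=R_y q_1$ in the third. Concretely one can try linear-in-$t$ or simple trigonometric interpolants, e.g. $q_1(t)=\bigl(a\sin(2t),\,b\cos(2t)\bigr)$ and $q_2(t)=\bigl(b\cos(2t),\,-a\sin(2t)\bigr)$ for suitable constants $a,b>0$ — this automatically gives $P_x q_1(0)=0$, $P_y q_2(0)=0$, and at $t=\pi/4$ yields $q_1=(a,b)\cdot(\text{stuff})$... one adjusts so that $R_y q_1(\pi/4)=q_2(\pi/4)$ holds, which forces a relation between $a$ and $b$ and possibly a phase.

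The key steps, in order: \emph{First}, write down the explicit path and verify it lies in $\Gamma$ — check each boundary equality and inequality. \emph{Second}, verify $\gamma(t)\in\mathcal{Y}$ for all $t\in[0,\pi/4]$: since the arcs are small and centered on the two positive semiaxes, $q_1$ and $q_2$ stay bounded away from $0$, from each other, and from $-q_2$, because throughout the motion $q_1$ has $P_y q_1>0$ roughly and $q_2$ has $P_x q_2>0$ roughly, keeping them in disjoint regions; one records the explicit positive lower bounds on $|q_1|$, $|q_2|$, $|q_1-q_2|$, $|q_1+q_2|$. \emph{Third}, estimate the kinetic term $\tfrac12\int_0^{\pi/4}(|\dot q_1|^2+|\dot q_2|^2)\,dt$: with the trigonometric choice this is an exact elementary integral proportional to $(a^2+b^2)$ times $\pi/4$ (up to the factor from differentiating $\sin(2t),\cos(2t)$), which can be made small by taking $a,b$ moderate. \emph{Fourth}, estimate the potential term $\int_0^{\pi/4}\bigl(|q_1-q_2|^{-1}+|q_1+q_2|^{-1}+\tfrac12|q_1|^{-1}+\tfrac12|q_2|^{-1}\bigr)dt$ using the lower bounds from the second step; this is bounded by $(\pi/4)$ times the sum of the reciprocals of those bounds. \emph{Fifth}, choose the parameters $a,b$ (subject to the $R_y$-relation) to optimize the sum of the two contributions and check numerically that the total is $<5$; in fact one expects a value in the range $4$–$5$, consistent with Gerver's orbit having action below that of the square.

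The main obstacle is purely the \emph{quantitative bookkeeping}: one must pick the parameters so that the potential term — which wants the bodies far apart and hence wants $a,b$ large — and the kinetic term — which wants $a,b$ small — balance to give a total strictly below $5$. Because the gap to the total-collision bound $9.15$ is so large, and because the square's action gives a convenient benchmark, I do not expect this to be tight; a slightly more careful test path (for instance using the configuration that interpolates between the rhombus and the rectangle while keeping all mutual distances near their average value) will bring the value down comfortably. The proof is therefore essentially a one-paragraph explicit computation once the path is fixed; the author most likely simply displays a path, lists the distance lower bounds, and reports the resulting numerical upper bound.
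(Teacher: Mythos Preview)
Your overall strategy is exactly the paper's: write down one explicit test path in $\hat\Gamma$, bound each term of $\mathcal J$ separately, and add. The paper does this with a \emph{linear} path,
\[
q_1(t)=\Bigl(t,\ \tfrac{\pi}{4}-2t\Bigr),\qquad q_2(t)=\Bigl(\tfrac{\pi}{2}-t,\ t\Bigr),
\]
for which the kinetic density is constant ($\tfrac12(|\dot q_1|^2+|\dot q_2|^2)\equiv \tfrac72$) and each of $|q_1|,|q_2|,|q_1\pm q_2|$ is the square root of a quadratic in $t$, so one just reads off the minimum of each quadratic to bound the potential integrals. The resulting upper bound is
\[
\mathcal J(\gamma_{\mathrm{test}})\le \frac{7\pi}{8}+\frac{\sqrt5}{4}+\frac{\sqrt2}{8}+\frac{\sqrt{13}}{4}+\frac12\approx 4.886<5.
\]

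There is one genuine gap in your proposal worth flagging: the specific trigonometric candidate you write down, $q_1(t)=(a\sin 2t,\,b\cos 2t)$, $q_2(t)=(b\cos 2t,\,-a\sin 2t)$, does \emph{not} satisfy the endpoint condition at $t=\pi/4$. There $q_1(\pi/4)=(a,0)$ and $q_2(\pi/4)=(0,-a)$, and no reflection in a coordinate axis takes one to the other unless $a=0$. So ``adjusting by a phase or a relation between $a$ and $b$'' is not enough; the functional form itself has to change. You would have to redesign the path (for instance an affine one, as the paper does) before any of the five steps you list can be carried out. Once you pick a path that actually lands in $\Gamma$, the rest of your plan --- compute the kinetic integral exactly, bound each potential term by $(\pi/4)$ times the reciprocal of the minimal distance, and sum --- is precisely what the paper does, and is indeed a one-paragraph computation.
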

 \begin{proof}
 
 We define a test path $\gamma_{\mathrm{test}}$
 as follows: 
 \begin{align*}
 q_{1}(t) &= ( t, \frac{\pi}{4}- 2t) \\
 q_{2}(t)&=(\frac{\pi}{2}-t, t ). 
 \end{align*}
 See Figure \ref{fig:testp}.
 \begin{figure}
\begin{center}
\includegraphics[width=2in]{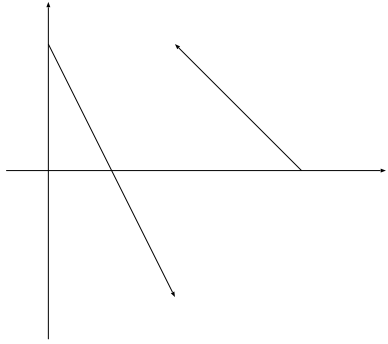}
\caption{Test path}
\end{center}
\end{figure}
 
 From an easy computation, the kinetic part for the test path has a constant value  along this path:
 \[ \frac{1}{2} (|\dot{q}_{1}(t)|^{2}+|\dot{q}_{2}(t)|^{2})\equiv \frac{7}{2}.\]
 Next we estimate the potential part.
 Since
 \[ |q_{1}|=\sqrt{t^{2}+(\frac{\pi}{4}- 2t)^{2} }
 =\sqrt{5(t-\frac{\pi}{10})^{2}+\frac{\pi^{2}}{80}}, \]
 we have
\[ \frac{1}{4|q_{1}|} \ge\frac{1}{4} \sqrt{\frac{80}{ \pi^{2}}} = \frac{\sqrt{5}}{ \pi}. \]
Hence 
\[ \int_{0}^{\pi/4} \frac{1}{4|q_{1}|}  dt \ge \frac{\sqrt{5}}{ \pi} \frac{\pi}{4}  =\frac{\sqrt{5}}{4}. \]
Similarly we get 
\[ \int_{0}^{\pi/4}  \frac{1}{4|q_{2}|}  \ge  \frac{\sqrt{2}}{8}, \quad 
\int_{0}^{\pi/4} \frac{1}{|q_{1}-q_{2}|}  \le \frac{\sqrt{13}}{4}, \quad 
\int_{0}^{\pi/4} \frac{1}{|q_{1}+q_{2}|} \le  \frac{1}{2}. \]
Consequently we have the following estimate:
  \begin{align*}
 \mathcal{J}(\gamma_{\mathrm{test}}) \le \frac{7 \pi}{8}+ \frac{\sqrt{5}}{4}+\frac{\sqrt{2}}{8}+\frac{\sqrt{13}}{4}+\frac{1}{2} 
 \approx  4.886.
 \end{align*}
 \end{proof}
 \begin{proof}[Proof of Proposition \ref{pr:totcol}]
Above two lemmata and Proposition \ref{p:min}, 
$\gamma^{0}$ has no total collision.
\end{proof}

\section{Elimination of binary collision}
\begin{proposition}\label{pr:bcol}
 $\gamma^{0}$ has no binary collision.
 \end{proposition}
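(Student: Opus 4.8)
I would rule out collisions at interior instants $t_{0}\in(0,\pi/4)$ by Ferrario--Terracini's criterion and collisions at the boundary instants $t_{0}\in\{0,\pi/4\}$ by Tanaka's scaling technique. \emph{Interior collisions.} Along $\Omega$ one has $q_{3}=-q_{1}$ and $q_{4}=-q_{2}$, so a path of $\Gamma$ is exactly a four-body path equivariant under $K:=\ker\tau=\langle g_{1}\rangle\cong\mathbb{Z}_{2}$. A variation of $\gamma^{0}$ supported in a compact subinterval $[a,b]\subset(0,\pi/4)$ is admissible in $\Gamma$, since it leaves the conditions imposed at $t=0,\pi/4$ untouched, and it corresponds to a $K$-equivariant, fixed-ends variation of the associated four-body path; hence the restriction of $\gamma^{0}$ to $[a,b]$ is a minimizer of the $K$-equivariant fixed-ends problem. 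I would then apply Proposition~\ref{p:ft} after verifying that $K$ has the rotating circle property, which is immediate here: $\sigma(g_{1})=(1\ 3)(2\ 4)$ fixes no index, so $G_{t}^{i}=\{e\}$ and $(\mathbb{R}^{2})^{G_{t}^{i}}=\mathbb{R}^{2}$ for every index $i$ and every $t$, while $\rho(g_{1})=-\mathrm{Id}_{2}$ is a rotation by $\pi$, so $G_{t}=K$ acts by rotation on any circle centered at the origin. Hence the restriction of $\gamma^{0}$ to $[a,b]$ is collision free; letting $[a,b]$ exhaust $(0,\pi/4)$, none of $q_{1}=q_{2}$, $q_{1}=-q_{2}$, $q_{1}=0$, $q_{2}=0$ can hold on $(0,\pi/4)$, so any remaining collision of $\gamma^{0}$ is isolated and sits at $t=0$ or $t=\pi/4$.

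\emph{Classification of the boundary collisions.} At $t=0$ a path of $\Gamma$ has $q_{1}(0)$ on the $y$-axis and $q_{2}(0)$ on the $x$-axis, so $q_{1}(0)=\pm q_{2}(0)$ would be a total collision, already excluded by Proposition~\ref{pr:totcol}; thus the only binary possibilities are $q_{1}(0)=0$ (collision of $1$ and $3$) or $q_{2}(0)=0$ (collision of $2$ and $4$), and then the inequality $P_{y}q_{1}(0)\ge0$, respectively $P_{x}q_{2}(0)\ge0$, holds with equality. At $t=\pi/4$ one has $q_{2}(\pi/4)=R_{y}q_{1}(\pi/4)$, so the only binary possibilities are $q_{1}(\pi/4)=q_{2}(\pi/4)$ (collision of $1$ and $2$, with $P_{x}q_{1}(\pi/4)\ge0$ active) and $q_{1}(\pi/4)=-q_{2}(\pi/4)$ (collision of $1$ and $4$, with $P_{y}q_{1}(\pi/4)\le0$ active). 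In each of these four cases $\gamma^{0}$ solves the four-body equations on a punctured one-sided neighborhood of $t_{0}$, so by the Sundman estimate the colliding pair undergoes a parabolic (zero-energy) collision with a well-defined ejection direction, exactly one of the inequalities defining $\Gamma$ being active.

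\emph{Eliminating the boundary collisions.} For each of the four cases I would run Tanaka's scaling technique \cite{T93,T94}. Blowing up $\gamma^{0}$ around $t_{0}$ at the natural rate, $\gamma^{0}_{\lambda}(t):=\lambda^{-2/3}\gamma^{0}(t_{0}+\lambda t)$ with $\lambda\searrow0$, the Sundman estimate shows that, along a subsequence, the component of $\gamma^{0}_{\lambda}$ describing the colliding pair converges to a parabolic collision--ejection solution $\rho_{*}$ of a Kepler problem $\ddot\rho=-\mu\,\rho/|\rho|^{3}$, while the distances involving the remaining two bodies blow up and drop out of the renormalized problem; the action of $\gamma^{0}$ on a window of size $\lambda$ about $t_{0}$ is then $\lambda^{1/3}$ times the Keplerian action of $\rho_{*}$, up to lower-order terms. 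The idea is to compare $\gamma^{0}$ with the path obtained from it by replacing, on such a window, the profile $\rho_{*}$ --- a degenerate Kepler ellipse through the singularity --- by a thin nearby Kepler arc that misses the singular set, chosen to respect the symmetry $G_{t_{0}}$, generated (with $g_{1}$) by $g_{2}$ when $t_{0}=0$ and by $g_{3}g_{2}$ when $t_{0}=\pi/4$, and to respect the active one-sided constraint at $t_{0}$; the scaling estimate shows that the Keplerian action strictly decreases under this replacement, hence so does $\mathcal{J}$, contradicting Proposition~\ref{p:min}. This rules out the boundary collisions, so $\gamma^{0}$ has no binary collision; together with Proposition~\ref{pr:totcol} it follows that $\gamma^{0}\in\hat{\Gamma}$, and hence the solution claimed in Theorem~\ref{main}.

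\emph{Where the difficulty lies.} The interior step is routine once the rotating circle property of $K$ is recorded. The substance of the argument is the boundary step, on two fronts. The first is making the blow-up rigorous --- compactness of $\{\gamma^{0}_{\lambda}\}$ in the appropriate topology, identification of the limiting Lagrangian, exclusion of degenerate limits. The second, which is the genuine obstruction, is the case in which the ejection direction of $\rho_{*}$ is such that the thin-arc modification conflicts with the active one-sided constraint of $\Gamma$ --- for instance when the collision is radial along the boundary of the admissible half-space. Treating it requires a sharper description of the binary collision than the Sundman estimate provides, obtained by passing to Levi--Civita coordinates; this is carried out in Appendix~A, where the regularized picture makes both the asymptotics and the construction of the improving path transparent.
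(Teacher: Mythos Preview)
Your treatment of interior collisions via Ferrario--Terracini and your classification of the possible boundary collisions are correct and agree with the paper.

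The boundary step, however, diverges from the paper's argument and leaves the decisive point open. The paper does not blow up $\gamma^{0}$; it scales the strong-force approximants $\gamma^{\varepsilon_{n}}$ by $\delta_{n}=|q_{1}^{\varepsilon_{n}}(0)|$, which introduces the parameter $d=\lim\varepsilon_{n}/\delta_{n}\in[0,\infty]$ and a genuine case split. For $d\in(0,\infty]$ the scaled limit $y_{d}$ crosses the $y$-axis at some $s_{0}$, and one reflects it across that axis on $[0,s_{0}]$ (Coti~Zelati's trick) to obtain a nonsmooth minimizer of $\mathcal{I}_{d}$, a contradiction; no ``thin Kepler arc'' is involved. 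For $d=0$ the scaled limit forces the ejection direction of $q_{1}$ at $t=0^{+}$ to be exactly $(0,-1)$, i.e.\ along the constraint axis. This is not an exceptional sub-case as your outline suggests: the symmetry $q_{1}(-t)=R_{y}q_{1}(t)$ already pins the Sundman direction to the $y$-axis, so your ``thin nearby Kepler arc respecting the active one-sided constraint $P_{y}q_{1}(0)\ge0$'' would have to start at $(0,\epsilon)$ with horizontal velocity and loop around the origin toward $(0,-1)$---which is precisely the scaled approximant itself, and you have not shown (nor is it clear) that such a replacement lowers the action.

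The paper's resolution of the $d=0$ case is a different mechanism that your outline does not describe. The Levi--Civita computation of Appendix~A is used not to build a local replacement arc but to show that, unless $\dot q_{2}(0)=0$ (which would force a later total collision, already excluded), the particles $q_{1}$ and $q_{2}$ lie in the \emph{same} quadrant for small $t>0$. One then applies a global quadrant-by-quadrant reflection $q\mapsto q^{\ast}$ (sending $q_{1}$ into $D_{4}$ and $q_{2}$ into $D_{1}$), which preserves the kinetic term and strictly decreases $U$ on a set of positive measure, contradicting Proposition~\ref{p:min}. The case $t=\pi/4$ is not treated in parallel but reduced to $t=0$ by the change of variables $Q_{1}=(q_{1}+q_{2})/\sqrt{2}$, $Q_{2}=(q_{1}-q_{2})/\sqrt{2}$. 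In short, the missing ingredients in your proposal are the role of the approximating sequence and the parameter $d$, the Coti~Zelati reflection for $d>0$, and---most importantly---the same-quadrant/reflection argument that actually disposes of the radial-ejection case.
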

The proof varies according to the collision time.

\paragraph{Elimination of binary collision at $t=0$}
The binary collisions are classified into four types with respect to the configuration:
\begin{align*}
\mathrm{I}&: q_{1}=0, q_{2} \neq 0\\
\mathrm{II}&: q_{2}=0, q_{1} \neq 0\\
\mathrm{III}&: q_{1}=q_{2} \neq 0, \\
\mathrm{IV}&: q_{1}=-q_{2} \neq 0.
\end{align*}
The binary collisions with type III and IV at $t=0$ are total collision
because of the symmetry. 
As we proved in the previous section, these types do not occur.

We consider the type I. 
We investigate the behavior by using Tanaka's technique \cite{T93}. 
We can represent the action functional  by
\[ \mathcal{J}^{\varepsilon}(\gamma^{\varepsilon})
=\int_{0}^{\pi/4} \frac{1}{2} |\dot{q_{1}^{\varepsilon}}|^{2}
+\frac{1}{2|q_{1}^{\varepsilon}|} +\frac{ \varepsilon }{2|q_{1}^{\varepsilon}|^{2}}
+f^{\varepsilon}(\dot{q^{\varepsilon}}_{2}, q_{1}^{\varepsilon}, q_{2}^{\varepsilon})dt \] 
where  
\[f^{\varepsilon}(\dot{q}_{2}, q_{1}, q_{2})=\frac{1}{2} |\dot{q_{2}}|^{2}+\frac{1}{|q_{1}-q_{2}|} 
+\frac{1}{|q_{1}+q_{2}|} + \frac{1}{2|q_{2}|} 
+\frac{\varepsilon}{|q_{1}-q_{2}|^{2}} 
+\frac{\varepsilon}{|q_{1}+q_{2}|^{2}}+\frac{ \varepsilon }{2|q_{2}|^{2}}. \] 
Let $\gamma^{\varepsilon}(t)=(q_{1}^{\varepsilon}(t), q_{2}^{\varepsilon}(t))$. 
Let $q_{1}^{0}(0)=0$. 
We define $\delta_{n}$ by 
\[ \delta_{n}=|q_{1}^{\varepsilon_n}(0)| >0 \]
and 
define a scale transformation $x_{n}$ of 
$q_{1}^{\varepsilon_n}$ by
\begin{equation*}
x_{n}(s)=\delta_{n}^{-1} q_{1}^{\varepsilon_n}(\delta_{n}^{3/2} s).
\end{equation*}
Taking a subsequence of $n$ if necessary, 
we can assume that 
\begin{equation}\label{eqn:6.36}
\frac{\varepsilon_{n}}{\delta_{n}} \to d \in [0, \infty].
\end{equation}
We first consider the case of $0 \le d < \infty$.
From Tanaka's argument \cite{T93}, 
for any $l>0$, 
$x_{n}$ converges uniformly on $[0, l]$ to the solution $y_{d}$ of the following equations:
\begin{align*}
    &  \ddot{y}+\frac{ y}{2|y|^{3}} + \frac{2dy}{|y|^{4}} =0  \\
    &  y(0)=(0,1) \\
    & \dot{y}(0)=\pm\sqrt{2(1+d)}(1, 0).
\end{align*}

Define $r_{d}(s)$ and $\theta_{d}(s)$ by
\[ y_{d}(s)=r_{d}(s) (\cos \theta_{d}(s), \sin \theta_{d}(s)), \quad \theta_{d}(0)=0. \]
As $s$  goes to infinity, $r_{d}(s)$ diverses to infinity and 
 $\theta_{d}(s)$ converges   to $\frac{\pi}{2}\mp \pi \sqrt{1+d}$ (See Figure \ref{fig:scaling}).
 \begin{figure}
\begin{center}
\includegraphics[width=2in]{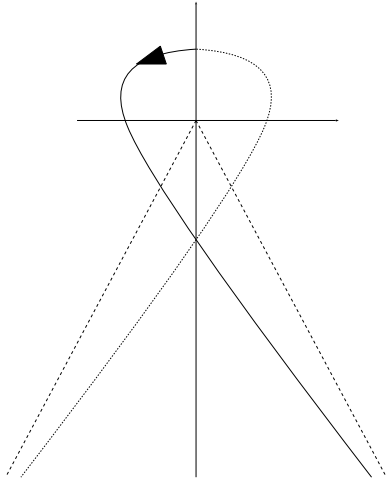}
\caption{The behavior of $y_{d}$}
\label{fig:scaling}
\end{center}
\end{figure}

\begin{lemma}
$y_{d}(t) (t \in [0, l])$ is the minimizer of the fixed-ends problem of $y_{d}(t)$
with respect to 
the action functional 
\[ \mathcal{I}_{d}(y)= \int_{0}^{l} \frac{1}{2} |\dot{y}|^{2} +\frac{1}{2 |y|} + \frac{d }{2|y|^{2}}dt.   \]
\end{lemma}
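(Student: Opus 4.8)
The strategy is to argue that if $y_d$ were not the minimizer of $\mathcal{I}_d$ on $[0,l]$ (with its own endpoints fixed), one could splice a lower-action competitor back into the rescaled curve $x_n$ and, after undoing the scaling, produce a path with strictly smaller $\mathcal{J}^{\varepsilon_n}$ than $\gamma^{\varepsilon_n}$ — contradicting that $\gamma^{\varepsilon_n}$ minimizes $\mathcal{J}^{\varepsilon_n}$ over $\hat\Gamma$. Thus the proof is essentially a scaling/blow-up argument combined with the minimality of $\gamma^{\varepsilon_n}$.

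First I would record the scaling identity: under $x(s) = \delta^{-1} q_1(\delta^{3/2}s)$, with the correspondingly rescaled second component, the ``$q_1$-part'' of the functional $\mathcal{J}^{\varepsilon_n}$ restricted to a time window $[0,\delta_n^{3/2}l]$ transforms, up to an overall factor $\delta_n^{1/2}$, into
\[
\int_0^l \tfrac12|\dot x|^2 + \frac{1}{2|x|} + \frac{\varepsilon_n/\delta_n}{2|x|^2}\,ds,
\]
which, as $n\to\infty$ and using $\varepsilon_n/\delta_n \to d$, converges to $\mathcal{I}_d$. The cross terms in $f^{\varepsilon}$ involving $q_2$ contribute negligibly on this shrinking window because $q_2^{\varepsilon_n}$ stays bounded away from $0$ near $t=0$ (its value is of order $1$, not collapsing), so $|q_1^{\varepsilon_n}-q_2^{\varepsilon_n}|$ and $|q_1^{\varepsilon_n}+q_2^{\varepsilon_n}|$ are bounded below; their integrals over a window of length $O(\delta_n^{3/2})$ vanish after multiplication by $\delta_n^{1/2}$. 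Hence the rescaled action of $x_n$ on $[0,l]$ tends to $\mathcal{I}_d(y_d)$, while by the uniform convergence $x_n \to y_d$ on $[0,l]$ together with weak lower semicontinuity of the kinetic term and pointwise convergence of the potential, $\liminf \mathcal{I}_d(x_n) \ge \mathcal{I}_d(y_d)$ as well.

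Next, suppose for contradiction that there is a path $z \in H^1([0,l],\mathbb{R}^2\setminus\{0\})$ with $z(0)=y_d(0)$, $z(l)=y_d(l)$, and $\mathcal{I}_d(z) < \mathcal{I}_d(y_d) - 2\eta$ for some $\eta>0$. I would build a competitor $\tilde q_1^{\,n}$ for $q_1^{\varepsilon_n}$ by replacing $q_1^{\varepsilon_n}$ on $[0,\delta_n^{3/2}l]$ with $\delta_n\, z(\delta_n^{-3/2}t)$, keeping it unchanged on $[\delta_n^{3/2}l, \pi/4]$; the endpoint matching at $t=0$ and at $t=\delta_n^{3/2}l$ holds up to an error $o(\delta_n)$ coming from $x_n(l)\to y_d(l)=z(l)$, which can be absorbed by a small $H^1$ correction whose action cost is $o(\delta_n^{1/2})$ — here one should also check that the modified path still lies in $\Gamma$, i.e. respects the one-sided boundary conditions at $t=0$, which is where the sign choice $\dot y(0) = \pm\sqrt{2(1+d)}(1,0)$ and the geometry of the blow-up limit enter. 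Undoing the scaling, the change in $\mathcal{J}^{\varepsilon_n}$ is $\delta_n^{1/2}(\mathcal{I}_d(z) - \mathcal{I}_d(y_d)) + o(\delta_n^{1/2}) \le -\eta\,\delta_n^{1/2} < 0$ for large $n$, contradicting minimality of $\gamma^{\varepsilon_n}$. Therefore no such $z$ exists, and $y_d$ is the minimizer on $[0,l]$.

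The main obstacle I expect is the bookkeeping around the splice point $t=\delta_n^{3/2}l$: the competitor $\delta_n z(\delta_n^{-3/2}\cdot)$ need not match $q_1^{\varepsilon_n}$ there on the nose, only asymptotically, so one must insert a transition layer and show its action contribution is of lower order than $\delta_n^{1/2}$; equally, one must confirm that the perturbation does not push the path out of $\hat\Gamma$ (no new collisions introduced, boundary inequalities preserved). The uniform convergence $x_n \to y_d$ on compact sets, which we are permitted to take from Tanaka's argument, is exactly what makes all these errors $o(\delta_n^{1/2})$, so once that is in hand the argument closes cleanly.
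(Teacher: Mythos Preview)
Your approach is essentially the same as the paper's: argue by contradiction, splice a hypothetical lower-action competitor for $y_d$ back into $\gamma^{\varepsilon_n}$ via the scaling $q_1^{\ast}(t)=\delta_n y^{\ast}(\delta_n^{-3/2}t)$ on $[0,\delta_n^{3/2}l]$, absorb the endpoint mismatch at $s=l$ with a short transition layer, and conclude that $\mathcal{J}^{\varepsilon_{n_k}}(\gamma_{n_k}^{\ast})-\mathcal{J}^{\varepsilon_{n_k}}(\gamma^{\varepsilon_{n_k}})=\delta_{n_k}^{1/2}\bigl(\mathcal{I}_d(y^{\ast})-\mathcal{I}_d(y_d)\bigr)+o(\delta_{n_k}^{1/2})<0$, contradicting minimality. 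The paper carries out exactly this, making the transition layer explicit as a linear interpolation on $[l-1/k,l]$ and bounding the $f^{\varepsilon}$ contribution by $O(\delta_{n_k}^{5/2})$; the skeleton is identical to what you outline.
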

\begin{proof}
$\mathcal{J}^{\varepsilon_{n}}(\gamma^{\varepsilon_{n}})$ can be written by 
\begin{align*}
\mathcal{J}^{\varepsilon_{n}}(\gamma^{\varepsilon_{n}})&=
\delta^{1/2}_{n} \mathcal{I}_{\varepsilon_{n} \delta_{n}^{-1}} (x_{n})
+\int_{0}^{\delta_{n}^{3/2} l} f^{\varepsilon_{n}}(\dot{q}_{2}, q_{1}, q_{2}) dt 
+ \int_{\delta_{n}^{3/2}l }^{  \pi/2} L^{\varepsilon_{n}}(\gamma, \dot{\gamma}) dt.
\end{align*}
Assume that there exists $y^{\ast}$ such that  
\begin{equation}\label{test1}
 \mathcal{I}_{d}(y^{\ast}) < \mathcal{I}_{d}(y_{d}) 
 \end{equation}
and that $y^{\ast}$ has the same fixed-ends: 
\begin{equation}\label{test2} y^{\ast}(0)=y_{d}(0), y^{\ast}(l)=y_{d}(l). 
\end{equation}
We will define a sequence $y_{k}^{\ast}$ converging to $y^{\ast}$ in the Sobolev space
such that 
\[ y_{k}^{\ast}(0)=x_{k}(0), y_{k}^{\ast}(l)=x_{k}(l)\]
as follows.
Note that 
\[ y^{\ast}(0) = x_{n}(0)=(1, 0), \quad y^{\ast}(l) - x_{n}(l) \to 0. \] 
We take a subsequence $n_{k}$, 
such that 
\[  \left|y^{\ast}(l) - x_{n_{k}}(l) \right| \le \frac{1}{k}. \]

$y_{k}^{\ast}$ is defined by 
\[ y_{k}^{\ast}(s)= 
\begin{cases}
y^{\ast}(s) & s \in [0, l-\frac{1}{k}] \\
k(y^{\ast}(l-\frac{1}{k}) - x_{n_{k}}(l)) (l-s) +x_{n_{k}}(l) & s \in [l-\frac{1}{k}, l]. 
\end{cases}
\]
$\mathcal{I}_{d} (y_{k}^{\ast})$ converges to $\mathcal{I}_{d}(y^{\ast} ).$
See Figure \ref{fig:connect}.
\begin{figure}
\begin{center}
\includegraphics[width=2in]{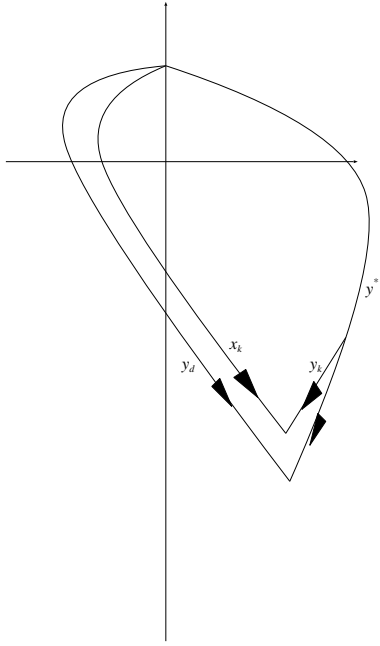}
\caption{$y_{d}, x_{k}, y^{\ast}$ and $y_{k}$.}
\label{fig:connect}
\end{center}
\end{figure}
The difference of the kinetic part of $\mathcal{I}_{d}$ is 
\begin{align*}
&\frac{1}{2}\int_{0}^{l}  \left|\frac{d y_{k}^{\ast}}{ds} \left(s\right)\right|^{2} -
\left|\frac{d y^{\ast}}{ds} \left(s\right)\right|^{2}ds \\
 & =
\frac{1}{2} \int_{l-k^{-1}}^{l}  k^{2}\left(y^{\ast}\left(l-\frac{1}{k}\right) - x_{n_{k}}\left(0\right)\right)^{2}+O(1)ds \\
& =\frac{1}{2} k\left(\frac{d y^{\ast}}{d s}\left(l\right)\frac{1}{k}+o\left(k^{-1}\right)+y^{\ast}\left(l\right) - x_{n_{k}}\left(l\right)\right)^{2} + O(k^{-1})\\
& =k\left(O\left(k^{-1}\right)\right)^{2}+O(k^{-1}) \to 0 \quad \left(k \to \infty\right), 
\end{align*}
and 
the potential function along $y_{k}^{\ast}$ uniformly converges to one along $y^{\ast}$. 

We define $\gamma^{\ast}_{k}(t)=(q^{\ast}_{1}, q^{\ast}_{2})$ by 
\begin{align*}
 q_{1}^{\ast} (t) &=
 \begin{cases}
 \delta_{n_{k}}y_{k}^{\ast}(\delta_{n_{k}}^{-3/2}t) & 0 \le t \le  \delta_{n_{k}}^{3/2}l \\
 q_{1}(t) &\delta_{n_{k}}^{3/2}l \le  t \le \pi/4
 \end{cases}
  \\ 
 q_{2}^{\ast} (t) &=q_{2}(t).
 \end{align*}
Note that 
\[ \max_{0\le t \le \pi/4}|f_{\varepsilon_{n_{k}}}(\dot{q}_{2}, q_{1}, q_{2})-
f_{\varepsilon_{n_{k}}}(\dot{q}_{2}^{\ast}, q_{1}^{\ast}, q_{2}^{\ast})| \le M \delta_{n_{k}}\]
for some constant $M$.
Therefore 
\begin{align*}
&\int_{0}^{\delta_{n_{k}}^{3/2} l} |f_{\varepsilon_{n_{k}}}(\dot{q}_{2}, q_{1}, q_{2})-
f_{\varepsilon_{n_{k}}}(\dot{q}_{2}^{\ast}, q_{1}^{\ast}, q_{2}^{\ast})| dt \le \int_{0}^{\delta_{n_{k}}^{3/2} l} M\delta_{n} dt = l M \delta_{n_{k}}^{5/2}. 
\end{align*}
Consequently 
we get 
\begin{align*}
 \mathcal{J}^{\varepsilon_{n_{k}}}(\gamma^{\ast}_{n_{k}}) -\mathcal{J}^{\varepsilon_{n_{k}}}(\gamma^{0})= 
 \delta_{n_{k}}^{1/2}(\mathcal{I}_{d}(y_{n_{k}}^{\ast})- \mathcal{I}_{d}(y_{d})) +
 O(\delta_{n_{k}}^{5/2}) \\
 = \delta_{n_{k}}^{1/2} (\mathcal{I}_{d}(y^{\ast})- \mathcal{I}_{d}(y_{d})) 
+o(\delta_{n_{k}}^{1/2}).
\end{align*}
For sufficiently large $k$, 
\[ \mathcal{J}^{\varepsilon_{n_{k}}}(\gamma^{\ast}_{n_{k}})<\mathcal{J}^{\varepsilon_{n_{k}}}(\gamma^{\varepsilon_{n_{k}}}).\]
This contradicts the fact that $\gamma^{\varepsilon_{n}}$ is the minimizer.
\end{proof}

Now we consider the case of  $0<d  <\infty$.
We use a similar technique as Coti-Zerati's one \cite{CZ}.
Fix large $l>0$.
There is $0<s_{0}<l$ such that 
$y(s_{0})$ belongs to $y$-axis. 
Consider the reflected curve 
\[ \hat{y}=
\begin{cases}
R_{y} y_{d}(s) & 0 \le s \le s_{0} \\ 
y_{d}(s) & s_{0} \le s. 
\end{cases}
\]
See Figure \ref{fig:reflect2}.
\begin{figure}
\begin{center}
\includegraphics[width=2in]{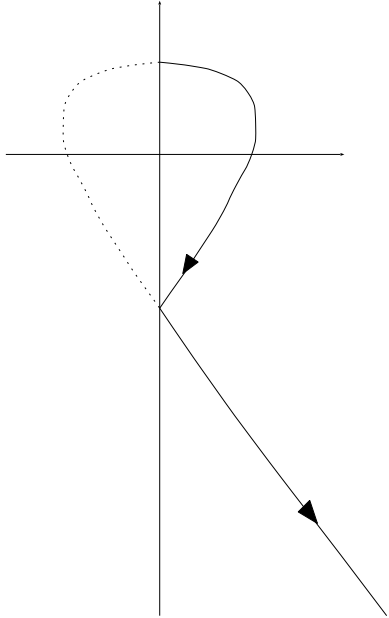}
\caption{The reflected path}
\label{fig:reflect2}
\end{center}
\end{figure}
$\hat{y}$ also has the same value of $\mathcal{I}_{d}$.
This  is also a minimizer of $\mathcal{I}_{d}$ 
but it cannot be smooth at $s=s_{0}$, 
which is a contradiction, 
since any minimizer must be smooth.

Next we consider the case of $d=0$.
Tanaka \cite{T94} also showed the the limit of the velocity direction
is same as $\lim_{s \to \infty} \frac{y_{d}(s)}{|y_{d}(s)|}$. Therefore this case implies
\[ \lim_{t \to +0} \frac{\dot{q}_{1}(t)}{|\dot{q}_{1}(t)|}=(0, -1).\]
If $\dot{q}_{2}(0)=0$, 
$q_{2}(t)$ moves on $x$-axis and $q_{1}(t)$ moves along $y$-axis. 
The total collision occurs at $t = \pi/4$ because of the symmetry.

Let $D_{i}$ be the $i$-th quadrant. 
For example, 
\[ D_{1}=\{(x, y) \in \mathbb{R}^{2} \mid x \ge 0, y \ge 0\}.\]

If the $y$-component of $\dot{q}_{2}(0)$ is negative, 
$q_{1}(t)$ goes into the fourth quadrant for small $t>0$.
It is intuitively clear.
As a rigorous argument, 
we can easily  show this by regularizing the binary collision with the Levi-Civita coordinates 
and by investigating the asymptotic behavior for small $t>0$.
This detail will be written in Appendix A.
As the result, 
if $\dot{q}_{2}(0) \neq 0$, two particles 
must move into a same quadrant for small $t >0$. 

We modify $q_{1}(t), q_{2}(t)$ as these particle move in the separate quadrants:
\begin{align*}
\gamma^{\ast}(t)&=(q_{1}^{\ast}(t), q_{2}^{\ast}(t))\\
q_{1}^{\ast}(t)&= 
\begin{cases}
R_{x}q_{1}(t) & q_{1}(t) \in D_{1} \\
-q_{1}(t) & q_{1}(t) \in D_{2} \\
R_{y}q_{1}(t) & q_{1}(t) \in D_{3} \\
q_{1}(t) & q_{1}(t) \in D_{4} 
\end{cases} \\
q_{2}^{\ast}(t)&= 
\begin{cases}
q_{2}(t) & q_{2}(t) \in D_{1} \\
R_{y} q_{2}(t) & q_{2}(t) \in D_{2} \\
-q_{2}(t) & q_{2}(t) \in D_{3} \\
R_{x}q_{2}(t) & q_{2}(t) \in D_{4}.
\end{cases} 
\end{align*}
See Figure \ref{fig:reflect}.
\begin{figure}
\begin{center}
\includegraphics[width=3in]{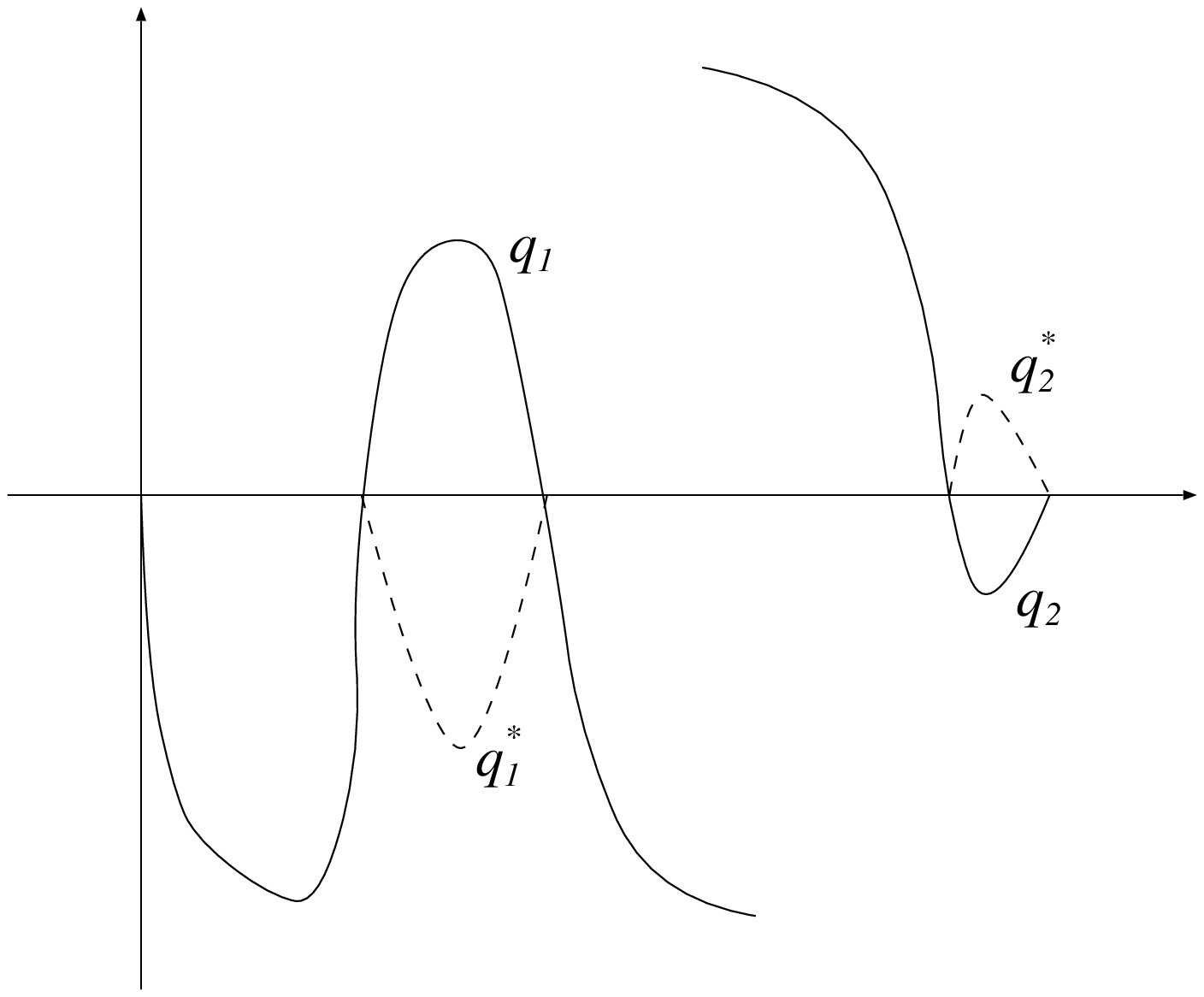}
\caption{$q$ and $q^{\ast}$}
\label{fig:reflect}
\end{center}
\end{figure}
We have
\[ U(q(t)) \ge U(q^{\ast}(t)) \quad \mathrm{and} \quad K(\dot{q})=K(\dot{q}^{\ast})\]
almost everywhere.
If two particles belongs to same quadrant at $t$, 
the inequality \[ U(\gamma(t)) > U(\gamma^{\ast}(t))\]
satisfies.
For small $t>0$, 
two particles belongs to same quadrant.
Therefore we have
\[ \mathcal{A}(\gamma) > \mathcal{A}(\gamma^{\ast}) \]
 which is a contradiction.

In the case of $d=\infty$, 
let 
\[ z_{n}(s)=\delta_{n}^{-1} q_{1}^{\varepsilon_n}\left( \left(\frac{\varepsilon_{n}}{\delta_{n}^{4}}\right)^{-1/2}s\right). \]
For any $l$, $z_{n}$ converges uniformly on $[0, l]$ to the solution $y_{\infty}$ of the following equations:

\begin{align*}
&\ddot{y}+\frac{2y}{|y|^{4}}=0 \\
&y(0)=(0, 1) \\
&\dot{y}(0)=(\pm \sqrt{2}, 0).
\end{align*}
We explicitly denote the solution as follows: 
\[ y_{\infty}(s)=(\cos \sqrt{2}s, \pm \sin \sqrt{2}s).\]
In this case, we can similarly induce a contradiction 
as the case of $0 < d < \infty$
since $y_{\infty}$ has intersections with $y$-axis.

We can eliminate the collision $q_{2}(0)=0$ similarly since 
the situation is essentially same.

\paragraph{Elimination of binary collision at $0< t < \pi/4$.}
Binary collisions at $0 < t < \pi/4$ can also be eliminated by using Tanaka's method. 
But one can immediately eliminate such a  collision  by using  Ferrario-Terracini theorem.
In our setting, 
the kernel of $\tau$ is the subgroup 
generated by $g_{1}$:
\[K:= \ker \tau = \langle g_{1} \rangle\]
where the group was introduced in \eqref{eqn:sym}. 
It is easy to check 
\[ K_{t}= K, \quad K_{t}^{i}=\{1\}.\]
Thus
$K=\ker \tau$ satisfies the rotating circle property
and $\gamma^{0}$ is a minimizer of the fixed-ends problem.
Hence Theorem \ref{p:ft} implies that 
 $q(t)$ has no collision in $(0, \pi/4)$.

\paragraph{Elimination of binary collision at $t= \pi/4$}
Use the coordinates $(Q_{1}, Q_{2}) \in (\mathbb{R}^{2})^{2}$ where
\[ Q_{1}=\frac{1}{\sqrt{2}}(q_{1}+q_{2}), \quad Q_{2}=\frac{1}{\sqrt{2}}(q_{1}-q_{2}).\]
$\mathcal{J}$ can be written as 
\[ \mathcal{J}(q)=\int_{0}^{\pi/2} \frac{1}{2} (|\dot{Q}_{1}|^{2}+|\dot{Q}_{2}|^{2})  
+\frac{1}{\sqrt{2}|Q_{1}+Q_{2}|} + \frac{1}{\sqrt{2}|Q_{1}-Q_{2}|} +
\frac{1}{\sqrt{2}|Q_{1}|} + \frac{1}{\sqrt{2}|Q_{2}|} dt. 
 \]
 The boundary condition at $t=\pi/4$ is 
 \[ P_{x}Q_{1}=0, \quad P_{y}Q_{2}=0, \quad P_{y}Q_{1} >0, \quad P_{x}Q_{2} >0. \]
This situation written by $Q_{1}, Q_{2}$ at $t =\pi/4$ 
is same as one 
written by $q_{1}, q_{2}$ at $t =0$.
Therefore we can eliminate the collision at $\pi/4$ by using 
completely same argument.  

This completes Proposition \ref{pr:bcol} and 
hence Main Theorem.
\section*{Appendix A: Levi-Civita regularization}

Here we identify $\mathbb{R}^{2}$ with $\mathbb{C}$.
The equations for the parallelogram four-body problem are represented by
\[\left\{\begin{split}
\frac{d q_{1}}{dt }&=p_{1} \\
\frac{d q_{2}}{dt}&=p_{2} \\
\frac{dp_{1}}{dt}&=-\frac{q_{1}}{2|q_{1}|^{3}} - \frac{q_{1}-q_{2}}{|q_{1}-q_{2}|^{3}} - \frac{q_{1}+q_{2}}{|q_{1}+q_{2}|^{3}}  \\
\frac{dp_{2}}{dt}&=-\frac{q_{2}}{2|q_{2}|^{3}} - \frac{q_{2}-q_{1}}{|q_{2}-q_{1}|^{3}} - \frac{q_{2}+q_{1}}{|q_{2}+q_{1}|^{3}} \end{split}\right. \qquad (q_{1}, q_{2}, p_{1}, p_{2} \in \mathbb{C}).
\]
We need to regularize the singularity $q_{1}=0 (q_{2} \neq 0)$.
Levi-Civita coordinates $(z, w) \in \mathbb{C}^{2}$are defined by
\[ q_{1}=-\frac{i}{2} z^{2}, p_{1}=-\frac{i w}{\bar{z}}\]
and the time variable $t$ is changed to $\tau$ according to 
$dt = |z|^{2}d\tau$. 
Note that we should identify 
$(z,w )$ with $-(z,w)$.
Let $E$ be the total energy:
\begin{align*}
 E&=\frac{1}{2}(|p_{1}|^{2}+|p_{2}|^{2})-\frac{1}{|q_{1}-q_{2}|}-\frac{1}{|q_{1}+q_{2}|}-
\frac{1}{2|q_{1}|}-\frac{1}{2|q_{2}|}\\
&=\frac{|w|^{2}-2}{2|z|^{2}}+\frac{1}{2}|p_{2}|^{2}-\frac{2}{|z^{2}-2q_{2}|}-\frac{2}{|z^{2}+2q_{2}|}-\frac{1}{2|q_{2}|}.
\end{align*}
Then the equations become
\begin{equation}\label{eqn:LC}
\left\{\begin{split}
\frac{d z}{d\tau }&=w \\
\frac{d q_{2}}{d\tau}&=|z|^{2}p_{2} \\
\frac{dw}{d \tau}
&=z \left(2E-|p_{2}|^{2} +\frac{4}{|z^{2}-2iq_{2}|}+\frac{4}{|z^{2}+2iq_{2}|}+\frac{1}{|q_{2}|}  \right) -4|z|^{2}\bar{z}\left( \frac{z^{2} -2i q_{2}}{|z^{2}-2iq_{2}|^{3}} + \frac{z^{2}+2iq_{2}}{|z^{2}+2iq_{2}|^{3}} \right)\\  
\frac{dp_{2}}{d\tau}&=|z|^{2}\left(-\frac{q_{2}}{2|q_{2}|^{3}} - \frac{4(2iq_{2}-z^{2})}{|2iq_{2}-z^{2}|^{3}} - \frac{4(2iq_{2}+z^{2})}{|2iq_{2}+z^{2}|^{3}} \right).\end{split}\right. 
\end{equation}
Note that the equations are real-analytic at $z=0, q_{2} \neq 0$.

Now consider the solution with the binary collision at $t=0$ and the case of $d=0$.
From $q_{1}(+0)=0, \frac{q_{1}}{|q_{1}|}(+0)=\frac{p_{1}}{|p_{1}|}(+0)=-i$ and the energy relation, 
we have
\[ z(0)=0,  \quad w(0)=\sqrt{2}.\]
If $p_{2}(0)=0$, the complex conjugate $\bar{z}(\tau)$ also satisfies the equation. 
From the unicity of the solution, $z(\tau)=\bar{z}(\tau)$.
Thus $z(\tau) \in \mathbb{R}$.
 
 Next we consider the case of $p_{2}(0) \neq 0$.
From the theory of analytic differential equations, 
the solution is real-analytic at $\tau=0$. 
We denote the Taylor extension for $z, q_{2}$ by 
\[ z=\sum_{k=0}^{\infty}a_{k} \tau^{k}, \quad q_{2}=\sum_{k=0}^{\infty} b_{k}\tau^{k}.\]
From  $z=\sqrt{2}\tau +o(\tau)$, 
we get $t=\frac{2}{3}\tau^{3} +o(\tau^{3})$.
Since $q_{2}(t)$ can be represented by 
$q_{2}(t)=q_{2}(0)+\dot{q}_{2}(0) t +o(t)$, 
the Taylor extension is
\[ q_{2}=b_{0}+b_{3}\tau^{3} + o(\tau^{3})\]
where $b_{3}=\frac{2}{3} \dot{q}_{2}(0).$

We substitute these Taylor extension of $z, q_{2}$ to the third equation 
of \eqref{eqn:LC}.
From the straight forward computation, 
we get 
\[ a_{1}, a_{2}, \dots, a_{9} \in \mathbb{R}\]
and 
\[ \mathrm{Im} ~a_{10}=\frac{2 \sqrt{2}}{15} |b_{0}|^{-5}  b_{0}    (b_{3}i)    \] 

Therefore if $\mathrm{Im} ~ b_{3} >0$ (and $<0$ resp.), 
$\mathrm{Im} ~a_{10}<0$ (and $>0$ resp.).
Therefore $\mathrm{Im} ~q_{1}$ is negative (and positive resp.) for small $t>0$.
Consequently 
$q_{1}$ and $q_{4}$ (and $q_{2}$ resp.) belongs to same quadrant.  

\section*{Appendix B: Numerical result}

We numerically get the minimizer of $\mathcal{J}$ by using the steepest decent method.
 
  The gradient vector $\nabla \mathcal{J}$ of $\mathcal{J}$ at $\gamma \in \hat{\Gamma}$ is defined by 
 \[ \mathcal{J}'(\gamma) \delta = (\nabla \mathcal{J}(\gamma), \delta)_{H^{1}}.\]
Consider the differential equation 
 \begin{equation}\label{Fin}
  \frac{d \gamma}{d s}= -\nabla \mathcal{J}(\gamma) \qquad \gamma \in \Gamma.
  \end{equation} 
 Let  $\mathbf{a}_{1}=(1, 0, 0, 0), \mathbf{a}_{2}=(0, 1, 0, 0),  \mathbf{a}_{3}=(0, 0,1, 0),  \mathbf{a}_{4}=(0,0,0,1)$.
 
The set 
\[ \left\{ \frac{ \sin l t }{\sqrt{\pi (1+l^{2})}}\mathbf{a}_{1}, \frac{\cos l t }{\sqrt{\pi(1+l^{2})}} \mathbf{a}_{2} , 
  \frac{ \cos l t }{\sqrt{\pi (1+l^{2})}}\mathbf{a}_{3}, \frac{\sin l t }{\sqrt{\pi(1+l^{2})}} \mathbf{a}_{4}  \mid l \in \mathbb{N}\right\}\]
   is an orthonormal  basis of the tangent space $T_{\gamma}\hat{\Gamma}=:T$ of $\hat{\Gamma}$.  
   We denote $q \in \hat{\Gamma}$ by 
   \[q= \sum_{l=1}^{\infty} \sum_{j=1}^{4} \left(\xi_{lj}\frac{ \sin l t }{\sqrt{\pi (1+l^{2})}}\mathbf{a}_{j}+ \eta_{lj}\frac{\cos l t }{\sqrt{\pi(1+l^{2})}} \mathbf{a}_{j}\right) \in \hat{\Gamma}. \]
   The differential equation \eqref{Fin} is approximately represented by 
\begin{align}
\frac{d \xi_{lj}}{d s}&= - \left(\nabla \mathcal{J}(q), \frac{ \sin l t }{\sqrt{\pi (1+l^{2})}}\mathbf{a}_{j}\right)_{H^{1}} \notag \\
&= - \mathcal{J}'(q) \left(\frac{ \sin l t }{\sqrt{\pi (1+l^{2})}}\mathbf{a}_{j}\right) \notag \\
&=- \int_{0}^{2 \pi } \dot{q}\cdot\left( \frac{ l \cos l t }{\sqrt{\pi (1+l^{2})}} \mathbf{a}_{j}\right)+\nabla U(q) \cdot \left(\frac{ \sin l t }{\sqrt{\pi (1+l^{2})}}\right)\mathbf{a}_{j}  dt \label{eqn:xi}
\end{align}
where 
\[ U(q_{1}, q_{2})=\frac{1}{|q_{1}-q_{2}|}+\frac{1}{|q_{1}+q_{2}|}+\frac{1}{2|q_{1}|}+\frac{1}{2|q_{2}|} \qquad ((q_{1}, q_{2}) \in \mathbb{R}^{2} \times \mathbb{R}^{2}).\]
The equation with respect to $\eta_{lj}$  is similarly obtained as follows: 
\begin{align}
\frac{d \eta_{lj}}{d s}= -\int_{0}^{2 \pi } -\dot{q}\cdot\left( \frac{ l \sin l t }{\sqrt{\pi (1+l^{2})}} \mathbf{a}_{j}\right)+\nabla U(q) \cdot \left(\frac{ \cos l t }{\sqrt{\pi (1+l^{2})}}\right)\mathbf{a}_{j}  dt. \label{eqn:eta}
\end{align}
 We approximate $T$ with a finite-dimensional subspace $T_{k}$
 \begin{align*}
  T_{k}&= \left\langle  \frac{ \sin l t }{\sqrt{\pi (1+l^{2})}}\mathbf{a}_{1}, \frac{\cos l t }{\sqrt{\pi(1+l^{2})}} \mathbf{a}_{2} , 
  \frac{ \cos l t }{\sqrt{\pi (1+l^{2})}}\mathbf{a}_{3}, \frac{\sin l t }{\sqrt{\pi(1+l^{2})}} \mathbf{a}_{4} 
  \mid 0\le l \le k\right \rangle.
\end{align*}
We restrict the equation \eqref{eqn:xi}, \eqref{eqn:eta} onto $T_k$.

We numerically solve these differential equations \eqref{eqn:xi}-\eqref{eqn:eta}
by using the Euler method with randomly taken initial conditions in $\Gamma$.
All orbits converge to the super-eight orbit as $s$ increases.
The numerical computation shows that the minimizer is the Gerver's super-eight.

The figure stands for one example.
\begin{figure}[htbp]
\begin{center}
\includegraphics[width=12cm]{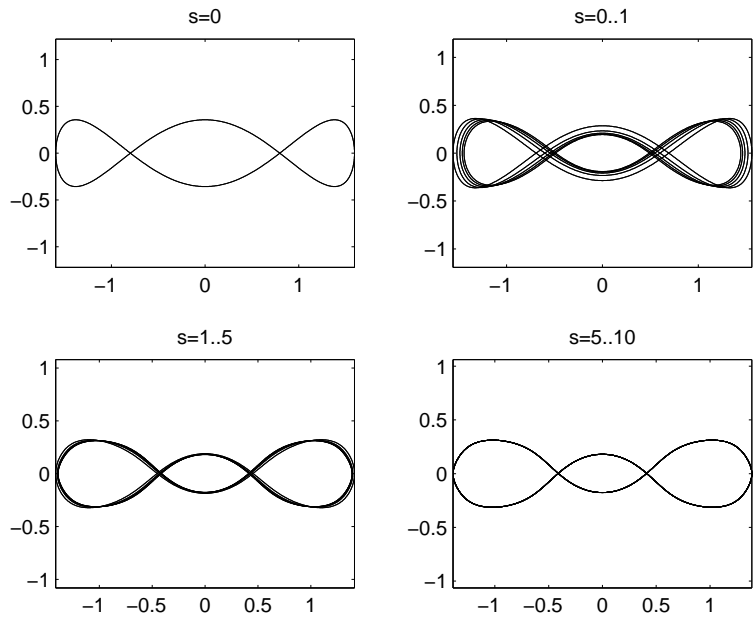}
\caption{Numerical result}
\quote{
This figure stands for one orbit of \eqref{Fin}.
We computed the flow of \eqref{Fin} with many other initial condition 
whose braid is same as one of the super-eight. 
All orbits which the author tried converged to the super-eight orbit. 
}
\end{center}
\end{figure}
\\~\\
{\bf Acknowledgement} 
The author  thanks Professor Kazunaga Tanaka for his helpful comments.
The author is  supported  by the Japan Society for the Promotion of Science (JSPS),
Grant-in-Aid for Young Scientists (B) No. 40467444.


\begin{thebibliography}{9}
 \bibitem{BT}
 Barutello, V.; Terracini, S. Action minimizing orbits in the n-body problem with simple choreography constraint. 
 {\it Nonlinearity} {\bf 17},  2015--2039 (2004)
 
 \bibitem{Chen01}
Chen, K-C, :Action-minimizing orbits in the parallelogram four-body problem with equal masses. 
{\it Arch. Ration. Mech. Anal.}  {\bf 158} 293--318(2001)
 
  \bibitem{Chen06}
 Chen, K.-C. Removing collision singularities from action minimizers for the N-body problem with free boundaries.
{\it  Arch. Ration. Mech. Anal.} {\bf 181} 311--331(2006)
 
 \bibitem{Chenciner}
 Chenciner, A. Action minimizing solutions of the Newtonian n-body problem: from homology to symmetry. 
 {\it Proceedings of the International Congress of Mathematicians,} Vol. III (Beijing, 2002), 279--294, Higher Ed. Press, Beijing, 2002.
 
\bibitem{CM00}
Chenciner, A., Montgomery, R.: A remarkable periodic solution of the three-body problem in the case of equal masses. 
{\it Ann. of Math.} (2) {\bf 152},  881--901 (2000)

\bibitem{CZ} 
Coti Zelati, V.:  Periodic solutions for a class of planar, singular dynamical systems.
{\it J. Math. Pures Appl.} {\bf 68} (1989), 109--119.

\bibitem{FT04}
Ferrario, D. L., Terracini, S.:
On the existence of collisionless equivariant
minimizers for the classical n-body problem. 
{\it Invent. Math.} {\bf 155}, 305--362 (2004)


\bibitem{G77}
 Gordon, W. B.:
A Minimizing Property of Keplerian Orbits, 
{\it American Journal of Mathematics}  {\bf 99}, 961--971 (1977)  

\bibitem{KZ03}
Kapela, T., Zgliczy\'{n}ski, P.: The existence of simple choreographies for the N-body problem---a computer-assisted proof. {\it Nonlinearity } {\bf 16}, 1899--1918 (2003)

\bibitem{Marchal}
Marchal, C. How the method of minimization of action avoids singularities. Modern celestial mechanics: from theory to applications (Rome, 2001). {\it Celestial Mech. Dynam. Astronom.} {\bf 83} 325--353 (2002)


\bibitem{P}
Palais, R. S.:The principle of symmetric criticality.
{\it Comm.Math. Phys.} {\bf 69} 19--30 (1979)

\bibitem{Shibayama}
Shibayama, M. : 
Minimizing periodic orbits with regularizable collisions in the n-body problem.
{\it Arch. Ration. Mech. Anal.} {\bf 199} 821--841.  (2011)

\bibitem{T93}
Tanaka, K.: Noncollision solutions for a second order singular Hamiltonian system with weak force. 
{\it Ann. Inst. H. Poincar\'{e} Anal. Non Lin\'{e}aire} {\bf 10}, 215--238  (1993)

\bibitem{T94}
Tanaka, K.: A note on generalized solutions of singular Hamiltonian systems.
{\it Proc. Amer. Math. Soc.} {\bf 122}, 275--284 (1994)

\bibitem{Venturelli}
Venturelli, A.: A variational proof of the existence of von Schubart's orbit.
 {\it Disc. Cont. Dyn. Syst.} B 10, 699--717 (2008)

\end{thebibliography}
\end{document}